\let\mathbb\mathds
\DeclareMathAlphabet\oldmathcal{OMS}        {cmsy}{b}{n}
\SetMathAlphabet    \oldmathcal{normal}{OMS}{cmsy}{m}{n}
\DeclareMathAlphabet\oldmathbcal{OMS}       {cmsy}{b}{n}
\newtheorem{theorem}{Theorem}[section]
\newtheorem{lemma}[theorem]{Lemma}
\newtheorem{proposition}[theorem]{Proposition}
\newenvironment{example}{\medskip \refstepcounter{theorem}
\noindent  {\bf Example \thetheorem}.\rm}{\,}
\newenvironment{remark}{\medskip \refstepcounter{theorem}
\newcommand     {\comment}[1]   {}
\newcommand{\mute}[2] {}
\newcommand     {\printname}[1] {}

\noindent  {\bf Remark \thetheorem}.\rm}{\,}
\def\<{\langle}
\def\>{\rangle}
\def\BOne{{\mathchoice {\rm 1\mskip-4mu l} {\rm 1\mskip-4mu l}
                          {\rm 1\mskip-4.5mu l} {\rm 1\mskip-5mu l}}}
\def\fract#1#2{\raise4pt\hbox{$ #1 \atop #2 $}}
\def\decdnar#1{\phantom{\hbox{$\scriptstyle{#1}$}}
\left\downarrow\vbox{\vskip15pt\hbox{$\scriptstyle{#1}$}}\right.}
\def\bbb{{\mathbb B}}
\def\bbc{{\mathbb C}}
\def\bbi{{\mathbb I}}
\def\bbp{{\mathbb P}}
\def\bbr{{\mathbb R}}
\def\bbz{{\mathbb Z}}
\def\gra{\alpha}
\def\grb{\beta}
\def\grg{\gamma}
\def\gri{\iota}
\def\grl{\lambda}
\def\gro{\omega}
\def\grz{\zeta}
\def\grD{\Delta}
\def\grG{\Gamma}
\def\grL{\Lambda}
\def\bfa{{\bf a}}
\def\bfb{{\bf b}}
\def\bfd{{\bf d}}
\def\bfw{{\bf w}}
\def\cald{{\mathcal D}}
\def\calf{{\mathcal F}}
\def\calg{{\mathcal G}}
\def\cali{{\mathcal I}}
\def\cals{{\oldmathcal S}}
\def\la#1{\hbox to #1pc{\leftarrowfill}}
\def\ra#1{\hbox to #1pc{\rightarrowfill}}
\def\calz{{\oldmathcal Z}}
\def\gn{{\mathfrak n}}
\def\go{{\mathfrak o}}
\def\gt{{\mathfrak t}}
\def\gu{{\mathfrak u}}
\def\gA{{\mathfrak A}}
\def\gC{{\mathfrak C}}
\def\gT{{\mathfrak T}}
\def\hook{\mathbin{\hbox to 6pt{%
                 \vrule height0.4pt width5pt depth0pt
                 \kern-.4pt
                 \vrule height6pt width0.4pt depth0pt\hss}}}
\def\cJ{\hat{J}}
\def\lcm{{\rm lcm}}
\def\12{\xi_{k_1,k_2}}
\def\m5{M^5_{k_1,k_2}}
\begin{document}

\title{Sasakian Manifolds with Perfect Fundamental Groups}

\author{Charles P. Boyer and Christina W. T{\o}nnesen-Friedman}
\address{Charles P. Boyer, Department of Mathematics and Statistics,
University of New Mexico, Albuquerque, NM 87131.}
\email{cboyer@math.unm.edu} 
\address{Christina W. T{\o}nnesen-Friedman, Department of Mathematics, Union
College, Schenectady, NY 12308, USA } \email{tonnesec@union.edu}

\thanks{Charles Boyer was partially supported by a grant \#245002 from the Simons Foundation, and Christina T{\o}nnesen-Friedman was partially supported by a grant \#208799 from the Simons Foundation}

\begin{abstract}
Using the Sasakian join construction with homology 3-spheres, we give a countably infinite number of examples of Sasakian manifolds with perfect fundamental group in all odd dimensions $\geq 3$. These have extremal Sasaki metrics with constant scalar curvature. Moreover, we present further examples of both Sasaki-Einstein and Sasaki-$\eta$-Einstein metrics.
\end{abstract}

\keywords{Extremal Sasaki metrics, $\eta$-Einstein metrics, orbifolds, complete intersections, homology spheres}

\subjclass[2000]{Primary: 53D42; Secondary:  53C25}

\maketitle

\markboth{Sasakian Manifolds with Perfect Fundamental Groups}{Charles P. Boyer and Christina W. T{\o}nnesen-Friedman}

\bigskip
\centerline{Dedicated to Professor Augustin Banyaga on the occasion of his 65th birthday}
\bigskip

\section{Introduction}
Until recently, except for the rather obvious examples, little seemed to be known about Sasakian manifolds with non-trivial fundamental group; see, however, \cite{Che11}. In this note we construct many examples of Sasakian manifolds with a perfect fundamental group. The examples we present all have extremal Sasaki metrics (in the sense of Calabi), and have constant scalar curvature. When the Sasaki cone has dimension greater than one, the Openness Theorem of \cite{BGS06} implies the existence of other extremal Sasaki metrics which generally do not have constant scalar curvature. We leave for future work the explicit construction of extremal non constant scalar curvature Sasaki metrics. Our main purpose here is to prove the following theorems:

\begin{theorem}\label{thmA}
For each odd dimension $\geq 3$ there exists a countable infinity of Sasakian manifolds with a perfect fundamentfinial group which admit Sasaki metrics with constant scalar curvature. Furthermore, there is an infinite number of such Sasakian manifolds that have the integral cohomology ring of $S^2\times S^{2r+1}$.
\end{theorem}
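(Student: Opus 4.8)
The plan is to realize the required Sasakian manifolds as \emph{Sasakian joins} $M \star_{\ell_1,\ell_2} S^3_\Gamma$, where $S^3_\Gamma$ is a homology $3$-sphere arising as a Brieskorn sphere $\Sigma(a_1,a_2,a_3)$ (for example the Poincar\'e sphere $\Sigma(2,3,5)$), carrying its natural quasi-regular Sasakian structure with perfect finite fundamental group $\Gamma$, and where $M$ is a compact regular Sasakian manifold over a smooth projective base. The join construction (as developed in the Boyer--Galicki program) produces a Sasakian manifold whose dimension is $\dim M + 3 + 1 - 1 = \dim M + 3$; taking $M$ to be a regular Sasakian structure over $\mathbb{CP}^r$ (i.e. essentially $S^{2r+1}$ with a weighted Reeb field) yields total dimension $2r+3$, and letting $r$ range over all nonnegative integers, together with the odd base case coming from $S^3_\Gamma$ itself, gives examples in every odd dimension $\geq 3$. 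The countable infinity in each dimension comes from varying the relatively prime weights $(\ell_1,\ell_2)$ of the join, which produce infinitely many distinct (e.g. non-homeomorphic or at least non-isomorphic) Sasakian structures.

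The first step is to identify the fundamental group. The homology sphere $S^3_\Gamma$ has $\pi_1 = \Gamma$ perfect (trivial abelianization since $H_1(S^3_\Gamma) = 0$) but nontrivial, and I would use a Seifert-type or orbifold fibration argument for the join to compute $\pi_1$ of the total space. The key point is that the join fibers over the product of the quotient orbifolds with generic fiber a torus or circle, and a van Kampen / long-exact-sequence-of-a-fibration computation should show that the fundamental group of the join is (up to the relevant quotient by the $S^1$-action used in forming the join) isomorphic to $\Gamma$, hence perfect. I would lean on whatever join $\pi_1$-formula is available earlier in the paper or in the cited Boyer--Galicki references, reducing this to verifying that the circle used in the join construction acts so as not to introduce new generators and that $H_1$ of the total space vanishes while $\pi_1$ does not.

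The second step is to produce the constant scalar curvature (extremal) Sasaki metric. Here I would invoke the admissible-construction / join machinery for constant scalar curvature Sasaki (cscS) metrics: when one factor is a cscS (indeed Sasaki--Einstein or $\eta$-Einstein) manifold of the appropriate type and the other is the homology sphere, there is an explicit choice of the join parameters $(\ell_1,\ell_2)$ (or a transverse admissible deformation) for which the resulting transverse K\"ahler class admits a cscS metric. The standard approach is to reduce the cscS equation on the join to a single ODE (the admissible/Calabi ansatz boundary-value problem) and to check positivity of the relevant polynomial or the sign of the Futaki-type obstruction; one then shows that for infinitely many weight choices this ODE has the required solution, establishing extremality with constant scalar curvature.

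For the cohomological refinement, I would specialize to the join $S^3_\Gamma \star_{\ell_1,\ell_2} S^{2r+1}$ (or $S^3 \star S^{2r+1}$ modified by $\Gamma$) and compute its integral cohomology ring via the Gysin sequence of the circle bundle over the product orbifold $\mathbb{CP}^1_{\mathrm{orb}} \times \mathbb{CP}^r$, arranging the weights so that the Euler class makes the total space a rational (indeed integral) cohomology $S^2 \times S^{2r+1}$; infinitely many weight choices give infinitely many such manifolds. The main obstacle I anticipate is the cscS existence step: verifying that the admissible cscS ODE/positivity condition actually holds for an infinite family of join parameters, rather than merely for isolated cases, and doing so uniformly across all odd dimensions, is where the real analytic and combinatorial work lies; the $\pi_1$ and cohomology computations, while requiring care with the orbifold structure, are comparatively formal.
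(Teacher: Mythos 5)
Your topological strategy coincides with the paper's: form the join of a Seifert fibered homology $3$-sphere with $S^{2r+1}$, compute $\pi_1$ from the exact sequences of the two fibrations attached to the join (in particular the circle bundle $S^1\to M^3_\bfa\times N\to M^3_\bfa\star_{k,l}N$, which exhibits $\pi_1$ of the join as a quotient of the perfect group $\pi_1(M^3_\bfa)$, hence perfect), compute the cohomology by a Gysin/spectral-sequence argument over the product orbifold, and distinguish infinitely many examples by $c_1(\cald)$; this is exactly Theorem \ref{1conperfect}, Proposition \ref{homnsphe}, Theorem \ref{homs2s3} and Section \ref{diffcon}. Two corrections on this part. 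First, a Brieskorn homology sphere $\Sigma(a_1,a_2,a_3)$ has \emph{finite} perfect fundamental group only for $(2,3,5)$; all the others have infinite perfect $\pi_1$, being quotients of $\widetilde{PSL}(2,\bbr)$ (Proposition \ref{M3asph}). Perfectness still passes to quotients, so nothing breaks, but your phrase ``perfect finite fundamental group $\Gamma$'' silently restricts you to the Poincar\'e sphere, whereas the paper's Theorem \ref{homs2s3} is built on the negative (infinite $\pi_1$) homology spheres. Second, for $r>1$ the integral cohomology ring of $S^2\times S^{2r+1}$ forces $l=1$: in the paper's spectral sequence $d_4(\gra)=(ls)^2$, so any $l>1$ creates torsion. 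Your ``arrange the weights'' step needs this restriction.

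The genuine gap is the CSC existence step, which you yourself flag as the unresolved ``main obstacle'': you propose to run the admissible/Calabi-ansatz ODE and verify a positivity or Futaki-type condition for infinitely many join parameters. None of that machinery is needed, and as written your plan leaves the central claim unproved. The missing idea is Belgun's theorem (Theorem \ref{Belthm}, with Lemma \ref{sastype}): every Sasakian homology $3$-sphere other than $S^3$ admits, in the same deformation class, a Sasakian structure of constant $\Phi$-sectional curvature ($1$ for the Poincar\'e sphere, $-4$ otherwise), equivalently its base orbifold $B_\bfa$ carries a K\"ahler--Einstein orbifold metric (Section \ref{orbKE}). The transverse K\"ahler geometry of the join in the class $k\gra_1+l\gra_2$ is then simply the \emph{product} of two CSC orbifold K\"ahler metrics --- the scaled KE metric on $B_\bfa$ and the scaled Fubini--Study metric on $\bbc\bbp^r$ --- and a product of CSC K\"ahler metrics is CSC because the scalar curvatures add. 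Its orbifold Boothby--Wang lift is a CSC (indeed a ray of CSC) Sasaki metric on $M^3_\bfa\star_{k,l}S^{2r+1}$ for \emph{every} pair $(k,l)$ satisfying the smoothness condition $\gcd(la_0\cdots a_n,k)=1$; this is the one-line proof of Theorem \ref{mainthm}. There is no ODE, no positivity condition, and no obstruction to check, uniformly in the dimension and in the weights. Your proposal never arranges the homology-sphere factor itself to be transversally CSC, and without that input the ``one CSC factor plus admissible machinery'' scheme (which is in any case designed for deformations within the Sasaki cone, not for the product Reeb ray used here) does not deliver the theorem.
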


\begin{theorem}\label{MaMbthm}
There exist a countably infinite number of contact 5-manifolds with perfect fundamental group and the integral cohomology ring of $S^2\times S^3$ that admit Sasaki metrics with constant scalar curvature. Moreover, there are such contact 5-manifolds that admit a ray of Sasaki-$\eta$-Einstein metrics.
\end{theorem}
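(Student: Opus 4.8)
The plan is to realize these 5-manifolds via the Sasakian join construction with a homology 3-sphere, exactly as in the companion Theorem~\ref{thmA}. The key point is that a Brieskorn homology 3-sphere $\Sigma = \Sigma(a_1,a_2,a_3)$ carries a natural quasi-regular Sasakian structure whose underlying space has perfect fundamental group precisely when the orbifold is a rational homology sphere with nontrivial $\pi_1$; the icosahedral case $\Sigma(2,3,5)$ (the Poincaré sphere) is the prototype, with $\pi_1$ equal to the binary icosahedral group, which is perfect. I would form the join $M^5_{k_1,k_2} = \Sigma \star_{k_1,k_2} S^3$ (or more precisely a weighted join of $\Sigma$ with a weighted $S^3 = S^3_{\mathbf{w}}$), where the integers $k_1,k_2$ parametrize the relative weighting of the two $S^1$-actions.

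**Identifying the topology.**

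First I would compute the fundamental group of the join. Since joining with a simply connected factor and quotienting by the diagonal circle action preserves the relevant part of $\pi_1$, the resulting 5-manifold should inherit the perfect fundamental group of $\Sigma$; I would verify this using the long exact homotopy sequence of the $S^1$-orbibundle $M^5_{k_1,k_2} \to \Sigma/S^1 \times S^2$ together with van Kampen. Second, I would pin down the integral cohomology ring. Because $\Sigma$ is a homology 3-sphere, its rational (indeed integral, away from the relevant torsion) homology matches that of $S^3$, so the join should have the cohomology ring of $S^2 \times S^3$; a Gysin-sequence computation for the circle bundle, combined with the known cohomology of the join of $S^3$-like spaces, gives this. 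The infinitude of distinct examples comes from varying $(k_1,k_2)$ over a suitable infinite admissible set, distinguishing them by torsion invariants or by the order structure associated to the weights.

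**Producing the metrics.**

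For the constant scalar curvature statement, I would appeal to the join results of Boyer–Tønnesen-Friedman (the admissible-construction machinery): for appropriate $(k_1,k_2)$ one solves the ODE/algebraic condition that the transverse metric on the base extends to an extremal Sasaki metric, and a dimension count or explicit computation shows the extremal metric has constant scalar curvature on a ray in the Sasaki cone. For the Sasaki-$\eta$-Einstein claim, I would use the fact that the join of a Sasaki-Einstein (or suitably normalized) homology 3-sphere with a weighted Sasaki-Einstein $S^3$ admits, after a $\mathcal{D}$-homothetic transverse scaling, a Sasaki-$\eta$-Einstein metric; since $\eta$-Einstein metrics come in rays under transverse homothety, one obtains the asserted ray.

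**The main obstacle.**

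The hard part will be simultaneously arranging the arithmetic so that (i) the weights $(k_1,k_2)$ give a \emph{smooth} 5-manifold (the orbifold singularities of the join must cancel, which is a gcd/coprimality condition on the weights relative to the Seifert data of $\Sigma$), and (ii) the cohomology comes out to be exactly that of $S^2 \times S^3$ rather than carrying extra torsion, while (iii) the extremality ODE still admits a constant-scalar-curvature solution on that same parameter range. Threading all three conditions through a single infinite family of $(k_1,k_2)$ is the delicate step; I expect it to reduce to showing that an explicit number-theoretic constraint has infinitely many solutions, and to verifying the $\eta$-Einstein ray survives the smoothness constraints on at least one subfamily.
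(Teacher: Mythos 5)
Your first half is sound and is essentially the paper's own route to Theorem \ref{thmA}: form $M^3_\bfa\star_{k,l}S^3$, get perfect $\pi_1$ from the two long exact homotopy sequences of the join (Theorem \ref{1conperfect}), get the integral cohomology ring of $S^2\times S^3$ for $r=1$ from the spectral sequence argument (Proposition \ref{homnsphe}), and get CSC metrics by lifting the product of the negative K\"ahler--Einstein orbifold metric on $B_\bfa$ with the Fubini--Study metric on $\bbc\bbp^1$ (Theorem \ref{homs2s3}); no admissible-construction ODE is needed, since a product of CSC K\"ahler orbifold metrics is already CSC and its orbifold Boothby--Wang lift is a CSC Sasaki metric. (Also a small correction: \emph{every} homology 3-sphere has perfect fundamental group, since $H_1=0$ means $\pi_1=[\pi_1,\pi_1]$; this is not special to the Poincar\'e sphere.)

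The genuine gap is in the Sasaki-$\eta$-Einstein claim. An $\eta$-Einstein Sasaki metric has transversally K\"ahler--Einstein geometry, so its basic first Chern class is proportional to $[d\eta]_B$, which forces $c_1(\cald)=0$ in $H^2(M,\bbr)$. For your join $M^3_\bfa\star_{k_1,k_2}S^3$ the transverse K\"ahler structure is the product of a \emph{negative} KE orbifold (every non-Poincar\'e homology 3-sphere is a negative Sasakian manifold by Lemma \ref{sastype}) with the \emph{positive} KE orbifold $\bbc\bbp^1$ (or a positive weighted projective line if you use $S^3_\bfw$), and by Equation (\ref{2c1orb2}) one gets $c_1(\cald)=-(2k_1+I_\bfa k_2)\grg\neq 0$ for all positive weights. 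No choice of $(k_1,k_2)$ and no transverse homothety can repair this: a transverse homothety rescales the whole transverse metric at once, and a product of Einstein metrics whose Einstein constants have opposite signs is never Einstein. The only exception inside your scheme is the Poincar\'e sphere, where $c_1(\cald)=(k_2-2k_1)\grg$ vanishes exactly for $(k_1,k_2)=(1,2)$; that is the single Sasaki--Einstein manifold $S^3/\bbi^*\star_{1,2}S^3$ of Theorem \ref{sethm}, not an infinite family of $\eta$-Einstein examples. The paper proves the second sentence by a different join: it takes \emph{two negative} homology 3-spheres and forms $M^3_\bfa\star_{\cali_\bfa,\cali_\bfb}M^3_\bfb$ with weights equal to the relative canonical indices, so that $c_1(\cald)=(\cali_\bfa I_\bfb-\cali_\bfb I_\bfa)\grg=0$; both base factors are then negative KE, suitably scaled their product is KE, and the lift is a ray of negative Sasaki-$\eta$-Einstein metrics (Theorem \ref{MaMb}(2)), with smoothness needing only $\gcd(d_\bfa,d_\bfb)=1$ (since $\gcd(d_\bfa,I_\bfa)=1$ automatically) and the $S^2\times S^3$ cohomology supplied by Proposition \ref{cohomjoin2}. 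To fix your argument you must replace the $S^3$ factor by a second negative homology 3-sphere (or other negative Sasakian 3-manifold), not merely tune the weights.
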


\begin{theorem}\label{sasetaein}
There exist negative Sasaki-$\eta$-Einstein (hence, Lorentzian Sasaki-Einstein) 7-manifolds with perfect fundamental group and arbitrary second Betti number $\geq 1$.
\end{theorem}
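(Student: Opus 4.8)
The plan is to realize each such 7-manifold as a Sasakian join $M_1\star_{l_1,l_2}M_2$ of a homology 3-sphere $M_1$ with a simply connected 5-dimensional Sasaki manifold $M_2$, chosen so that the transverse geometry is a product of negative K\"ahler-Einstein orbifolds. For $M_1$ I would take a Brieskorn homology 3-sphere $\Sigma(a_1,a_2,a_3)$ with $\sum_i 1/a_i<1$ (for instance $\Sigma(2,3,7)$); its natural Seifert fibration exhibits it as a quasi-regular Sasakian circle orbibundle over the hyperbolic 2-orbifold $Z_1=\bbp^1[a_1,a_2,a_3]$, which carries a constant-negative-curvature, hence K\"ahler-Einstein, metric with $c_1^{{\rm orb}}(Z_1)<0$, $b_1(Z_1)=0$, and $b_2(Z_1)=1$. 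Being an integral homology sphere, $\pi_1(\Sigma)$ is perfect. For $M_2$ I would use, for each $k\geq 1$, the link of an isolated hypersurface singularity in $\bbc^4$ (a Brieskorn-Pham link), which is a simply connected 5-manifold fibering over a complex-surface orbifold $Z_2=Z_2^{(k)}$ in a weighted projective 3-space; taking the degree larger than $|\mathbf{w}|$ forces $c_1^{{\rm orb}}(Z_2)<0$, and by the orbifold Aubin-Yau theorem (there is no obstruction in the negative case) $Z_2$ admits a K\"ahler-Einstein metric.

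Forming the join over $Z_1\times Z_2$, I would first secure smoothness: choose coprime $l_1,l_2$ and impose the usual gcd conditions on the orbifold data so that $M^{(k)}=M_1\star_{l_1,l_2}M_2^{(k)}$ is a smooth 7-manifold. Writing $c_1^{{\rm orb}}(Z_i)=-\kappa_i[\omega_i]$ with $\kappa_i>0$ rational, I would then fix the ratio $l_1:l_2=\kappa_2:\kappa_1$ (the coprime integer solution), which makes the transverse K\"ahler class $[l_2\omega_1+l_1\omega_2]$ of the join proportional to $c_1^{{\rm orb}}(Z_1\times Z_2)=-\kappa_1[\omega_1]-\kappa_2[\omega_2]$. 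With this alignment the product of the two negative K\"ahler-Einstein metrics, rescaled so that their Einstein constants agree, is again K\"ahler-Einstein and represents the transverse class, hence descends to a transverse K\"ahler-Einstein structure on $M^{(k)}$ with negative transverse Ricci curvature. By the standard correspondence between transverse K\"ahler-Einstein and $\eta$-Einstein Sasakian structures, $M^{(k)}$ then carries a negative Sasaki-$\eta$-Einstein metric, and the parenthetical Lorentzian Sasaki-Einstein statement follows from the usual sign reversal in the negative $\eta$-Einstein case.

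It then remains to read off $\pi_1$ and $b_2$. Since $M_2$ is simply connected, the homotopy exact sequence of the orbifold circle bundle $S^1\to M_1\times M_2\to M^{(k)}$ identifies $\pi_1(M^{(k)})$ with a quotient of $\pi_1(M_1\times M_2)=\pi_1(\Sigma)$; as a quotient of a perfect group is perfect, $\pi_1(M^{(k)})$ is perfect. From the rational Gysin sequence of the same bundle, using $b_1(Z_1)=b_1(Z_2)=0$ and the injectivity of cupping with the nonzero Euler class on $H^0$, I obtain $b_2(M^{(k)})=b_2(Z_1\times Z_2)-1=b_2(Z_2)=k$, yielding the full range of second Betti numbers. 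The step I expect to be the main obstacle is the construction of $M_2$: producing negative K\"ahler-Einstein surface orbifolds $Z_2$ with simply connected total space that realize \emph{every} value $b_2=k\geq 1$ (rather than merely infinitely many values) while simultaneously respecting the gcd and smoothness constraints of the join. This is where a carefully tunable family of weighted-homogeneous singularities, or of orbifold surfaces of general type, is required.
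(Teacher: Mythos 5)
Your overall skeleton is the same as the paper's: join a negative Brieskorn homology $3$-sphere with a simply connected negative Sasakian $5$-manifold (a hypersurface link), choose the join parameters proportional to the canonical indices so that $c_1(\cald)=0$ and the transverse K\"ahler class is negatively proportional to $c_1^{orb}$ of the product orbifold, and then read off perfectness of $\pi_1$ and the second Betti number from the homotopy and Gysin sequences of $S^1\to M_1\times M_2\to M_1\star M_2$. Those parts are correct, and your Betti number bookkeeping $b_2(M^{(k)})=b_2(Z_2)=b_2(M_2)+1$ agrees with the paper's Lemma \ref{Hjoin}; your index-alignment step is exactly the paper's passage to relative canonical indices in Theorem \ref{negetaEin}.

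However, the step you defer as ``the main obstacle'' is not a technicality to be smoothed out later: it is the actual mathematical content of this theorem, and your proposal leaves it unresolved. To obtain \emph{arbitrary} $b_2\geq 1$ you must exhibit, for every $l\geq 0$, a simply connected negative Sasakian $5$-manifold $N$ with $b_2(N)=l$ whose canonical index $I$ and order $\upsilon$ satisfy $\gcd(I,\upsilon)=1$ (so that, after choosing $\bfa$ with $\gcd(d_\bfa,\upsilon)=1$, the join $M^3_\bfa\star_{\cali_\bfa,\cali}N$ is a smooth manifold). The paper does precisely this: it invokes Gomez's negative Sasaki-$\eta$-Einstein structures \cite{Gom11} and exhausts all values of $l$ by seven explicit constructions --- the series of links of $z_0^4+z_1^{8k+2}+z_2^{4k+1}z_3+z_3^{2k+1}z_2=0$ covering odd $l=2k+1\geq 3$ (with $I=16k(4k+1)-(4k+3)^2$, $\upsilon=2(4k+1)$), the series $z_0^4+z_1^2+z_2^k+z_3^k=0$, $k\geq 9$ odd, covering even $l=k-1\geq 8$ (with $I=k-8$, $\upsilon=2k$), and an explicit table of weighted homogeneous polynomials with $I=1$ for the sporadic cases $l=0,1,2,4,6$ --- verifying $\gcd(I,\upsilon)=1$ in each case. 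This verification cannot be waved through by a general ``tunability'' argument: the paper notes that Gomez's own $l=4$ example has $\gcd(I,\upsilon)=11$, so the corresponding join is \emph{not} smooth and that example must be replaced. Without producing such a family and checking the arithmetic, your argument establishes the existence of negative Sasaki-$\eta$-Einstein $7$-manifolds with perfect fundamental group only for those values of $b_2$ for which a suitable $N$ happens to be available, not for every $b_2\geq 1$.
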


\begin{theorem}\label{sethm}
Let $N$ be a simply connected quasi-regular Sasaki-Einstein $(2r+1)$-dimensional manifold with Fano index $I_F$ and order $\upsilon$. Then if $\gcd(30I_F,\upsilon)=1$, the join $S^3/\bbi^*\star_{1,I_F}N$ is a smooth $(2r+3)$-manifold with perfect fundamental group and admits a Sasaki-Einstein metric. Such examples exist with rational cohomology ring of $S^2\times S^{2r+1}$ for all $r\geq 1$. Moreover, $S^3/\bbi^*\star_{1,2}S^3$ is Sasaki-Einstein and has integral cohomology ring of $S^2\times S^3$.
\end{theorem}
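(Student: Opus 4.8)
The plan is to build this Sasaki-Einstein manifold via the join construction applied to a specific homology 3-sphere and a simply connected Sasaki-Einstein manifold, then track the fundamental group and cohomology carefully.

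The plan is to exhibit the manifold as a Sasakian join and then verify, in turn, smoothness, the fundamental group, the Einstein condition, and the cohomology. First I would record the data of the first factor: the Poincar\'e sphere $S^3/\bbi^*$ is a smooth quasi-regular Sasaki--Einstein $3$-manifold (a constant-curvature space form) whose Reeb quotient is the Fano orbifold $\bbc\bbp^1_{2,3,5}=S^2/\bbi$ with its round K\"ahler--Einstein metric. Its order is $\upsilon_1=\lcm(2,3,5)=30$, and since $\deg c_1^{\mathrm{orb}}=\chi^{\mathrm{orb}}=\tfrac1{30}$ is the minimal positive degree of an orbifold line bundle, its Fano index is $I_1=1$. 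Writing the join as
\[
S^3/\bbi^*\star_{1,I_F}N=\bigl((S^3/\bbi^*)\times N\bigr)/S^1_{(I_F,-1)},
\]
of dimension $3+(2r+1)-1=2r+3$, smoothness of the quotient is equivalent to freeness of this circle action, which by the standard join criterion holds precisely when $\gcd(\upsilon_1 l_2,\upsilon_2 l_1)=\gcd(30I_F,\upsilon)=1$, which is the hypothesis.

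For the fundamental group I would use that the free action makes $S^1\to(S^3/\bbi^*)\times N\to S^3/\bbi^*\star_{1,I_F}N$ a principal circle bundle and feed this into the homotopy exact sequence. Since $N$ is simply connected, $\pi_1\bigl((S^3/\bbi^*)\times N\bigr)=\bbi^*$, and the image of $\pi_1(S^1)$ is generated by the class of a generic orbit, which projects to $I_F\,c$, where $c\in\bbi^*$ is the class of a generic Reeb orbit of $S^3/\bbi^*$. Lifting to $S^3=SU(2)$, a generic Hopf orbit first returns to its $\bbi^*$-orbit via the central element $-I\in\bbi^*$, so $c=-I$ has order $2$. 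Hence
\[
\pi_1\bigl(S^3/\bbi^*\star_{1,I_F}N\bigr)=\bbi^*/\langle(-I)^{I_F}\rangle,
\]
which is $\bbi^*$ when $I_F$ is even and $\bbi\cong A_5$ when $I_F$ is odd; both are perfect, as asserted (and both force $H_1=0$).

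The Einstein claim then follows from the join theorem: the Reeb quotient of the join is $\bbc\bbp^1_{2,3,5}\times\mathcal{Z}_N$, a product of positive K\"ahler--Einstein Fano orbifolds, hence itself K\"ahler--Einstein, and the weights $(1,I_F)$ satisfy the balancing condition $I_1 l_2=I_2 l_1$ (trivially, as $I_1=1$) that makes the join of two Sasaki--Einstein manifolds Sasaki--Einstein. For the rational statement I would take $N=S^{2r+1}$ (round, $I_F=r+1$, $\upsilon=1$, so $\gcd(30(r+1),1)=1$ for all $r\ge1$); then the join is a circle bundle over $\bbc\bbp^1_{2,3,5}\times\bbc\bbp^r$, rationally $S^2\times\bbc\bbp^r$, whose Euler class has both K\"unneth components nonzero. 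The Gysin sequence concentrates $H^*(\,\cdot\,;\bbq)$ in degrees $0,2,2r+1,2r+3$, and Poincar\'e duality with $H^4=0$ then forces the product ring structure of $S^2\times S^{2r+1}$.

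For the integral statement I would take $N=S^3$ with weights $(1,2)$, so the base is $\bbc\bbp^1_{2,3,5}\times\bbc\bbp^1$ and $\pi_1=\bbi^*$. The same Gysin count gives the Betti numbers of $S^2\times S^3$, and it remains only to show that $H_2$ -- hence, by duality and $H_1=0$, every homology group -- is torsion-free. This last point is the step I expect to be the main obstacle: controlling the integral torsion of the Seifert bundle over the orbifold $\bbc\bbp^1_{2,3,5}\times\bbc\bbp^1$, where the orbifold factor is the natural source of torsion in $H_2$. I would resolve it with the integral Leray--Serre spectral sequence of the bundle, using the pairwise coprimality of $2,3,5$ and the perfectness of $\bbi^*$ (which already kills $H_1$) to rule out the would-be torsion, thereby pinning the integral cohomology ring to that of $S^2\times S^3$.
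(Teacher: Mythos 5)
Your strategy is the paper's own: realize the manifold as the join $S^3/\bbi^*\star_{1,I_F}N$, get smoothness from $\gcd(30I_F,\upsilon)=1$, compute $\pi_1$ from the homotopy sequence of the circle bundle $S^1\to (S^3/\bbi^*)\times N\to$ join, obtain the Einstein metric from the monotone (relative Fano index) choice of weights $(1,I_F)$, and prove the cohomology statements by Gysin/spectral sequence arguments; your data for the first factor (order $30$, Fano index $1$, base $\bbc\bbp^1_{2,3,5}$) agree with the paper's presentation of the Poincar\'e sphere as the link $L(2,3,5)$ with $\bfw=(6,10,15)$, $d=30$. Your fundamental group computation $\pi_1\cong\bbi^*/\langle(-I)^{I_F}\rangle$ is correct and in fact sharper than what the paper records: Theorem \ref{1conperfect} leaves the even case as ``either $\bbi$ or $\bbi^*$'', while you pin it down by identifying the generic Reeb fiber class as the central element; only perfectness is needed for the theorem. (One small gloss: a product of positive K\"ahler--Einstein orbifolds is Einstein only after the Einstein constants are matched by rescaling each factor, which is exactly what the balancing condition $I_1l_2=I_2l_1$ together with the monotone choice of K\"ahler class provides.)

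The genuine gap is the step you flagged yourself: the integral cohomology ring of $S^3/\bbi^*\star_{1,2}S^3$. Your plan --- run the integral Leray--Serre spectral sequence of the Seifert bundle over $\bbc\bbp^1_{2,3,5}\times\bbc\bbp^1$ and ``rule out the would-be torsion'' using coprimality of $2,3,5$ and perfectness of $\bbi^*$ --- does not go through as stated, because integrally the orbifold base is genuinely not cohomologically $S^2\times S^2$: for instance $H^4_{orb}(\bbc\bbp^1_{2,3,5};\bbz)\cong H^4(\bbi;\bbz)$, a nontrivial finite group containing $\bbz_3\oplus\bbz_5$ (visible on Sylow subgroups), so the $E_2$ page of your spectral sequence contains precisely the torsion you must kill. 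Perfectness of $\pi_1$ only gives $H_1=0$, hence $H^4=0$ by Poincar\'e duality on the $5$-manifold; the real danger is torsion in $H_2\cong H^3$, which duality alone cannot exclude, and eliminating it requires an explicit analysis of the differentials out of $E^{0,3}$ against that orbifold torsion. The paper's Proposition \ref{homnsphe} avoids this entirely by using the other fibration, $M^3_\bfa\times S^3\to M^3_\bfa\star_{k,l}S^3\to \mathsf{B}S^1$ (the Borel fibration of the free join circle action), whose fiber has torsion-free integral cohomology because $M^3_\bfa$ is an integral homology sphere; there the $E_2$ page is torsion free, a primitive class in $E_2^{0,3}$ survives $d_4$, and one reads off $H^2\cong\bbz$, $H^3\cong\bbz$, $H^4=0$, with the ring structure then forced by Poincar\'e duality. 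Substituting this argument for your final step (or genuinely carrying out the orbifold-base computation, which requires knowing $H^*_{orb}$ with its torsion) completes your proof.
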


Many examples of Sasaki-Einstein manifolds with perfect fundamental group can be worked out by choosing $N$ appropriately, as for example in Chapters 11 and 13 of \cite{BG05}, and in particular the 5-dimensional Sasaki-Einstein manifolds in \cite{BGK05,Kol05b,Kol04,BN09}.

\section{Preliminaries on Sasakian Geometry}
Here we give a brief review of Sasakian geometry referring to \cite{BG05} for details and further development. Sasakian  geometry can be thought of roughly as the odd dimensional version of K\"ahlerian geometry. Its relation to contact geometry mimics the relation of K\"ahler geometry to symplectic geometry. 

\subsection{Sasakian Structures}
A Sasakian structure on a smooth manifold $M$ of dimension $2n+1$  consists of a contact 1-form $\eta$ together with its Reeb vector field $\xi$ which satisfies $\eta(\xi)=1$ and $\xi\hook d\eta=0$, an endomorphism field $\Phi$ which annihilates $\xi$ and defines a strictly pseudoconvex CR structure $(\cald,J)$ satisfying $\cald=\ker\eta$ and $J=\Phi |_\cald$, and finally a compatible Riemannian metric $g$ defined by the equation 
\begin{equation}\label{sasmetric}
g=d\eta\circ(\Phi\otimes \BOne)+\eta\otimes\eta,
\end{equation}
such that $\xi$ is a Killing vector field of $g$. We denote such a Sasakian structure by the quadruple $\cals=(\xi,\eta,\Phi,g)$.  Note that the Reeb vector field $\xi$ generates a one dimensional foliation $\calf_\xi$ of $M$ whose transverse structure is K\"ahler with transverse K\"ahler form $d\eta$. The transverse K\"ahler metric on $\cald$ is $g_T=d\eta\circ (\Phi\otimes\BOne)$. There is a freedom of scaling, namely, given a Sasakian structure $\cals=(\xi,\eta,\Phi,g)$ consider the {\it transverse homothety} defined by sending the Sasakian structure  to 
\begin{eqnarray}\label{transhomot}
\cals &=(\xi,\eta,\Phi,g)\mapsto \cals_a=(a^{-1}\xi,a\eta,\Phi,g_a)~\text{where $a\in\bbr^+$ and} \\ 
&g_a =ag+(a^2-a)\eta\otimes\eta. \notag
\end{eqnarray}
$\cals_a$ is another Sasakian structure which generally is inequivalent to $\cals$; hence, Sasakian structures come in rays. 

When $M$ is compact the closure of any leaf of $\calf_\xi$ is a torus $\gT$ of dimension at least one, and the flow is conjugate to a linear flow on the torus (cf. Theorem 2.6.4 of \cite{BG05}). This implies that for a dense subset of Sasakian structures $\cals$ on a compact manifold the leaves are all compact 1-dimensional manifolds, i.e circles. Such $\cals$ are known as {\it quasiregular} in which case the foliation $\calf_\xi$ comes from a locally free circle action. Assuming this circle action is effective the isotropy subgroups are all finite cyclic groups, and the least common multiple $\upsilon=\upsilon(\cals)$ of theirs orders is an invariant of the Sasakian structure $\cals$, called the {\it order} of $\cals$. Then the quotient space $\calz$ has the structure of a projective algebraic orbifold with an induced K\"ahler form $\gro$ such that $\pi^*\gro=d\eta$ where $\pi$ is the quotient projection. The isotropy subgroups of the local circle action on $M$ give rise to the local uniformizing groups of the orbifold. If the circles comprising the leaves of $\calf_\xi$ all have the same period, $\cals$ is said to be {\it regular}, and the quotient space $\calz$ is a smooth projective algebraic variety with a trivial orbifold structure. A leaf of $\calf_\xi$ that is not quasi-regular is a copy of $\bbr$ in which case $\calf_\xi$ is said to be {\it irregular}. A theorem of Rukimbira \cite{Ruk95a} says that an irregular Sasakian structure can be approximated by quasi-regular ones. A somewhat more general case occurs if we drop the condition that $(\cald,J)$ be a CR structure, but only consider a strictly pseudoconvex {\it almost} CR structure, that is, the almost complex structure $J$ is not necessarily integrable. Then $\cals=(\xi,\eta,\Phi,g)$ is called a {\it K-contact} structure. A quasi-regular contact structure is equivalent to having a compatible K-contact structure \cite{Ruk95a}. All K-contact structures considered in this paper are Sasakian. The (almost) complex structure $\cJ$ on $\calz$ is also related to the (almost) CR structure $J$ on $M$. For any foliate vector field $X$ on $M$ we have $\pi_*\Phi X=\cJ\pi_*X$. We say that $J=\Phi |_\cald$ is the {\it horizontal lift} of $\cJ$.

The flow of the Reeb vector field $\xi$ lies in the center of the automorphism group $\gA\gu\gt(\cals)$ of a Sasakian structure $\cals=(\xi,\eta,\Phi,g)$; hence, any Sasakian structure on a compact $(2n+1)$-dimensional manifold has a $k$-dimensional torus $\gT_k$ in its automorphism group, where $1\leq k\leq n+1$. The subset $\gt_k^+$ of the Lie algebra $\gt_k$ of this torus consisting of vector fields $\xi'$ that satisfy the positivity condition $\eta(\xi')>0$ everywhere on $M$ forms a cone called the {\it Sasaki cone} \cite{BGS06}. It provides a $k$-dimensional family of Sasakian structures associated with $\cals$ all having the same underlying CR structure $(\cald,J)$. There can be many Sasaki cones associated with the same contact structure $\cald$ as seen, for example, in  \cite{Boy10a,Boy10b,BoTo11,BoTo12}. These give rise to bouquets of Sasaki cones which correspond to distinct conjugacy classes of tori in the contactomorphism group $\gC\go\gn(\cald)$. For more on the important infinite dimensional Fr\'echet Lie group $\gC\go\gn(\cald)$ we refer to Banyaga's seminal book \cite{Ban97}.

The conditions on the Riemannian curvature for Sasaki metrics have been very well studied, and we refer to \cite{Bla10,BG05} and references therein for details. It suffices here to mention only some basic facts about the Ricci curvature and $\Phi$ sectional curvature of a Sasaki metric. Let $\cals=(\xi,\eta,\Phi,g)$ be a Sasakian structure on a $(2n+1)$-dimensional manifold, then the Ricci curvature ${\rm Ric}_g$ of $g$ satisfies the identities
\begin{equation}\label{Ricsas}
\xi\hook {\rm Ric}_g=2n\eta, \qquad {\rm Ric}_g|_{\cald\times \cald}={\rm Ric}_T-2g|_{\cald\times \cald}
\end{equation}
where ${\rm Ric}_T$ denotes the Ricci curvature of the transverse K\"ahler metric. One easily sees from these equations that the corresponding scalar curvatures are related by 
\begin{equation}\label{cscsas}
s_g=s_T-2n. 
\end{equation}
If $K$ is the ordinary Riemannian sectional curvature of $g$, then the $\Phi$ sectional curvature of $g$ is defined by $K(X,\Phi X)$ where $X$ is any vector field. Sasaki metrics of constant $\Phi$ sectional curvature are known as {\it Sasakian space forms}. There are three types of Sasakian space forms, those with $K(X,\Phi X)=c>-3,<-3$ and $=-3$. Within each type different constants are related by a transverse homothety. If we have a Sasakian space form of constant $\Phi$ sectional curvature $c$, then after a transverse homothety Equation (\ref{transhomot}), we obtain a  Sasakian space form of constant $\Phi$ sectional curvature $c'=\frac{c+3}{a}-3$. They are the analogs of constant holomorphic sectional curvature in complex geometry. Indeed, under the Boothby-Wang correspondence constant $\Phi$ sectional curvature $c$ corresponds precisely to constant holomorphic transverse sectional curvature $k=c+3$. Assuming the Sasakian manifold is simply connected, in the spherical or positive case ($c>-3$) the transverse K\"ahler structure is that of complex projective space $\bbc\bbp^n$; whereas, in the hyperbolic or negative case ($c<-3$) the transverse K\"ahler structure is that of the complex hyperbolic ball $\bbb^n$. Finally we make note of the Ricci tensor for the transverse K\"ahler structures of constant holomorphic sectional curvature $k$, viz.
\begin{equation}\label{transricciholo}
{\rm Ric}_T=\frac{n+1}{2}kg_T.
\end{equation}

\subsection{The Join Construction}

We shall make use of the {\it join construction} first introduced in \cite{BG00a} in the context of Sasaki-Einstein manifolds, and developed further for general Sasakian structures in \cite{BGO06}, see also Section 7.6.2 of \cite{BG05}. Products of K\"ahlerian manifolds are K\"ahler, but products of Sasakian manifolds do not even have the correct dimension. Nevertheless, one can easily construct new quasi-regular Sasakian manifolds from old quasi-regular ones by constructing circle orbibundles over the product of K\"ahler orbifolds. Let $M_i$ for $i=1,2$ be compact quasi-regular contact manifolds with Reeb vector fields $\xi_i$, respectively. These vector fields generate locally free circle actions on $M_i$ and their quotients are symplectic orbifolds $\calz_i$.  Then the quotient of the product $T^2=S^1\times S^1$ action on $M_1\times M_2$ is $\calz_1\times \calz_2$. Taking primitive symplectic forms $\gro_i$ on $\calz_i$ we consider the symplectic form $\gro_{k_1,k_2}=k_1\gro_1+k_2\gro_2$ on $\calz_1\times \calz_2$ where $k_1,k_2$ are relatively prime positive integers. Then by the orbifold Boothby-Wang construction \cite{BG00a} the total space of the principal circle orbibundle over $\calz_1\times \calz_2$ corresponding to the cohomology class $[\gro_{k_1,k_2}]\in H^2(\calz_1\times \calz_2,\bbz)$ has a natural quasi-regular contact structure whose contact form $\eta_{k_1,k_2}$ satisfies $d\eta_{k_1,k_2}=\pi^*\gro_{k_1,k_2}$ where $\pi$ is the natural orbibundle projection. Moreover, if the base spaces $\calz_i$ are complex orbifolds and the $\gro_i$ K\"ahler forms,  the total space of this orbibundle, denoted by $M_1\star_{k_1,k_2}M_2$, has a natural Sasakian structure. It is called {\it the join} of $M_1$ and $M_2$. Generally, $M_1\star_{k_1,k_2}M_2$ is only an orbifold; however, if $\gcd(\upsilon_1k_2,\upsilon_2k_1)=1$ it will be a smooth manifold, where $\upsilon_i$ is the order the Sasakian structures on $M_i$. So if $M_i$ has Sasakian structures $\cals_i=(\xi_i,\eta_i,\Phi_i,g_i)$ we obtain a new Sasakian structure $\cals_{k_1,k_2}$ on $M_1\star_{k_1,k_2}M_2$ such that the following diagram
\begin{equation}\label{comdia1}
\begin{matrix}  M_1 \times M_2 &&& \\
                          &\searrow && \\
                          \decdnar{\pi_B} && M_1\star_{k_1,k_2}M_2 &\\
                          &\swarrow && \\
                          \calz_1\times \calz_2&&& 
\end{matrix}
\end{equation}
commutes. Here $\pi_B$ is the quotient projection of the $T^2$ torus action, the southeast arrow is the quotient projection by the circle action generated by the vector field 
$\frac{1}{2k_1}\xi_1-\frac{1}{2k_2}\xi_2$ and the southwest arrow is the quotient projection of the Reeb vector field  $\frac{1}{2k_1}\xi_1+\frac{1}{2k_2}\xi_2$ of $M_1\star_{k_1,k_2}M_2$.

Although we are not able to distinguish diffeomorphism types of $M_1\star_{k_1,k_2}M_2$ generally, we can distinguish contact structures by the first Chern class $c_1(\cald)$ of the contact bundle $\cald$. 

\subsection{Extremal Sasakian Structures}
Given a Sasakian structure $\cals=(\xi,\eta,\Phi,g)$ on a compact manifold $M^{2n+1}$ we deform the contact 1-form by $\eta\mapsto \eta(t)=\eta+t\grz$ where $\grz$ is a basic 1-form with respect to the characteristic foliation $\calf_\xi$ defined by the Reeb vector field $\xi.$ Here $t$ lies in a suitable interval containing $0$ and such that $\eta(t)\wedge d\eta(t)\neq 0$. This gives rise to a family of Sasakian structures $\cals(t)=(\xi,\eta(t),\Phi(t),g(t))$ that we denote by ${\mathcal S}(\xi, \bar{J})$ where $\bar{J}$ is the induced complex structure on the normal bundle $\nu(\calf_\xi)=TM/L_\xi$ to the Reeb foliation $\calf_\xi$ which satisfy the initial condition $\cals(0)=\cals$. On the space ${\mathcal S}(\xi, \bar{J})$  we consider the ``energy functional'' $E:{\mathcal S}(\xi,\bar{J})\ra{1.4} \bbr$ defined by
\begin{equation}\label{var}
E(g) ={\displaystyle \int _M s_g ^2 d{\mu}_g ,}\, 
\end{equation}
i.e. the $L^2$-norm of the scalar curvature $s_g$ of the Sasaki metric $g$. Critical points $g$ of this functional are called {\it extremal Sasaki metrics}.  Similar to the K\"ahlerian case, the Euler-Lagrange equations for this functional says \cite{BGS06} that $g$ is critical if and only if the gradient vector field $J{\rm grad}_gs_g$ is transversely holomorphic, so, in particular, Sasakian metrics with constant scalar curvature are extremal. Since the scalar curvature $s_g$ is related to the transverse scalar curvature $s^T_g$ of the transverse K\"ahler metric by $s_g=s_g^T-2n$, a Sasaki metric is extremal if and only if its transverse K\"ahler metric is extremal. Hence, in the quasi-regular case, an extremal K\"ahler orbifold metric lifts to an extremal Sasaki metric, and conversely an extremal Sasaki metric projects to an extremal K\"ahler orbifold metric. Note that the deformation $\eta\mapsto \eta(t)=\eta+t\grz$ not only deforms the contact form, but also deforms the contact structure $\cald$ to an equivalent, isotopic, contact structure. So when we say that a contact structure $\cald$ has an extremal representative, we mean so up to isotopy. Deforming the K\"ahler form within its K\"ahler class corresponds to deforming the contact structure within its isotopy class. 

As mentioned above Sasaki metrics of constant scalar curvature are a special case of extremal Sasaki metrics. We shall abbreviate constant scalar curvature by CSC. A further special case of interest are the so-called Sasaki-$\eta$-Einstein metrics, or simply $\eta$-Einstein (see for example \cite{BGM06,BG05} and references therein). Recall that a Sasakian (or K-contact) structure $\cals=(\xi,\eta,\Phi,g)$ is called {\it $\eta$-Einstein} if there are constants $a,b$ such that 
\begin{equation}\label{etaE}
{\rm Ric}_g=ag +b\eta\otimes \eta
\end{equation}
 where ${\rm Ric}_g$ is the Ricci curvature of $g$. The constants $a,b$ satisfy $a+b=2n$. The scalar curvature $s_g$ of an $\eta$-Einstein metric is constant. Indeed, if the manifold has dimension $2n+1$, then the scalar curvature satisfies $s_g=2n(a+1)$. However, as we can easily see, not every constant scalar curvature Sasaki metric is $\eta$-Einstein. In fact, many of the CSC Sasaki metrics that we construct have a diagonal Ricci tensor consisting of constant diagonal blocks. Notice that if $b=0$ we obtain the more familiar Einstein metric, so $\eta$-Einstein is a generalization of Einstein. In this case the scalar curvature $s_g=2n(2n+1)$ and the transverse scalar curvature is $s_T=4n(n+1)$. Moreover, it follows from Equation (\ref{Ricsas}) that a Sasaki metric $g$ is $\eta$-Einstein if and only if the transverse K\"ahler metric $h$ is Einstein. So one easily sees that a transverse K\"ahler-Einstein metric is negative if and only if $a<-2$, and positive if and only if $a>-2$. We refer to these as negative (positive) $\eta$-Einstein metrics, respectively. Furthermore, given a positive $\eta$-Einstein metric there is a transverse homothety whose resulting metric is Sasaki-Einstein.  Even more is true in dimension $3$: a 3-dimensional Sasaki metric is $\eta$-Einstein if and only if it has constant $\Phi$ sectional curvature \cite{BGM06}. 
 
 It is easy to see that any Sasakian structure gives rise naturally to a Sasaki metric with a Lorentzian signature, see Section 11.8.1 in \cite{BG05}. Moreover, in the Lorentzian signature, one can apply a transverse homothety to a   negative Sasaki-$\eta$-Einstein metric to obtain a Lorentzian Sasaki-Einstein metric. Thus, we can obtain many examples of Lorentzian Sasaki-Einstein metrics \cite{Gom11}. 

\section{Seifert Fibered Homology $3$-Spheres}

Homology spheres are by definition manifolds whose integral homology coincides with that of a sphere. In dimension 3 this is equivalent to the fundamental group being {\it perfect}, that is, it coincides with its commutator subgroup. 
We want the homology 3-spheres that we consider to admit a Sasakian structure; hence, they must be Seifert fibered homology spheres with an effective fixed point free circle action. Here we give a brief review of such homology 3-spheres following \cite{Sav02,LeRa10} and the translation of \cite{Sei33} in \cite{SeTh80}. It is known that the binary icosahedral group $\bbi^*$ is the only non-trivial finite perfect subgroup of $SU(2)$ (see page 181 in \cite{Wol67}). It is a double cover of the simple group $\bbi$ of order $60$, the icosahedral group. Moreover, it follows from Perelman's proof of the Poincar\'e conjecture that up to diffeomorphism the only compact 3-manifold with a non-trivial finite perfect fundamental group is the celebrated Poincar\'e sphere $S^3/\bbi^*$. The remainder of the Seifert fibered homology 3-spheres, except for $S^3$, can be realized as a homogeneous space of the form $\widetilde{PSL}(2,\bbr)/\grG$ where $\widetilde{PSL}(2,\bbr)$ denotes the universal cover of the projective linear group $PSL(2,\bbr)$, and $\grG$ is a cocompact discrete subgroup of $\widetilde{PSL}(2,\bbr)$ \cite{NeRa78,RaVa81,LeRa10}. Hence, $\pi_1(M^3)$ is infinite and $M^3$ is aspherical. Recall that a manifold $M$ is {\it aspherical} if $\pi_k(M)=0$ for all $k>1$. Summarizing we have

\begin{proposition}\label{M3asph}
Let $M^3$ be a Seifert fibered homology 3-sphere which is not the standard sphere nor the Poincar\'e sphere. Then $M^3$ is aspherical and a homogeneous space of the form $\widetilde{\rm PSL}(2,\bbr)/\grG$ for some cocompact infinite discrete subgroup $\grG$. Furthermore, $\pi_1(M^3)$ is infinite and perfect.
\end{proposition}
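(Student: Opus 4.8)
The plan is to obtain the proposition by combining the classification of Seifert fibered homology 3-spheres with the geometrization of Seifert fibered 3-manifolds; the perfectness is then an elementary consequence of the homology-sphere hypothesis. First I would invoke the classification (as developed in \cite{Sei33,NeRa78,Sav02}): every Seifert fibered homology 3-sphere has the form $\Sigma(a_1,\dots,a_n)$ with pairwise relatively prime integers $a_i\geq 2$, where the base of the Seifert fibration is $S^2$ carrying $n$ cone points of orders $a_1,\dots,a_n$, and the rational Euler number of the fibration is $e=\pm 1/(a_1\cdots a_n)\neq 0$.

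Next I would run the standard trichotomy on the orbifold Euler characteristic $\chi^{\rm orb}=2-\sum_{i=1}^n(1-1/a_i)$ of the base. The excluded cases, the standard sphere $S^3$ (where $n\leq 2$) and the Poincar\'e sphere $\Sigma(2,3,5)$, are precisely the Seifert homology 3-spheres with $\chi^{\rm orb}>0$, i.e.\ with spherical base orbifold; by \cite{Wol67} these are the only ones with finite $\pi_1$, namely the trivial group and $\bbi^*$. Because the $a_i$ are pairwise coprime and at least $2$, at most one of them equals $2$, and a short check over pairwise-coprime triples shows $(2,3,5)$ to be the unique configuration with $\chi^{\rm orb}\geq 0$; in particular the flat and Nil cases $\chi^{\rm orb}=0$ never yield a homology sphere. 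Hence for every $M^3$ satisfying the hypotheses the base orbifold is hyperbolic, $\chi^{\rm orb}<0$.

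With a hyperbolic base orbifold and nonzero Euler number, the geometrization of Seifert fibered spaces places $M^3$ in the $\widetilde{PSL}(2,\bbr)$ geometry and realizes it as a homogeneous space $\widetilde{PSL}(2,\bbr)/\grG$, where $\grG\cong\pi_1(M^3)$ acts freely and properly discontinuously, and cocompactly since $M^3$ is closed \cite{NeRa78,RaVa81,LeRa10}. I expect this to be the main obstacle, not because it is new but because it is the deep classical input that must be cited correctly; in particular one must use $e\neq 0$ to rule out the competing $\bbh^2\times\bbr$ geometry, which arises exactly when the Euler number vanishes and would yield a fundamental group with infinite abelianization.

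Finally I would read off the three assertions. Asphericity is immediate: $\widetilde{PSL}(2,\bbr)$ is diffeomorphic to $\bbr^3$ and hence contractible, so $M^3=\widetilde{PSL}(2,\bbr)/\grG$ is a $K(\grG,1)$ and $\pi_k(M^3)=0$ for all $k>1$. The group $\grG$ is infinite because it is a cocompact lattice in the noncompact group $\widetilde{PSL}(2,\bbr)$. Perfectness requires no geometry at all: the Hurewicz theorem gives $H_1(M^3,\bbz)\cong\pi_1(M^3)/[\pi_1(M^3),\pi_1(M^3)]$, and the homology-sphere hypothesis forces $H_1(M^3,\bbz)=0$, so $\pi_1(M^3)$ equals its own commutator subgroup, exactly the equivalence recorded just before the proposition.
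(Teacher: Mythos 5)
Your proof is correct, and its endgame coincides with the paper's: asphericity from contractibility of $\widetilde{PSL}(2,\bbr)\cong\bbr^3$, infiniteness of $\grG$ from cocompactness in a noncompact Lie group, and perfectness from Hurewicz together with $H_1(M^3,\bbz)=0$ (the paper records this equivalence in the sentence preceding Proposition \ref{M3asph}). Where you genuinely differ is in how the case analysis is established. The paper does not derive it: the proposition is stated as a summary of the classical literature, quoting that $\bbi^*$ is the only nontrivial finite perfect subgroup of $SU(2)$ \cite{Wol67}, invoking Perelman's proof of the Poincar\'e conjecture to identify $S^3/\bbi^*$ as the unique compact 3-manifold with nontrivial finite perfect fundamental group, and then citing \cite{NeRa78,RaVa81,LeRa10} (and later \cite{Neu77}) for the fact that all remaining Seifert fibered homology 3-spheres are homogeneous spaces $\widetilde{PSL}(2,\bbr)/\grG$. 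You instead extract the trichotomy from the Seifert data itself: pairwise coprimality of the $a_i$ forces $\chi^{orb}<0$ in every case except $(2,3,5)$ (and the empty/spherical cases giving $S^3$), and the nonvanishing Euler number $e=\pm 1/(a_1\cdots a_n)$ excludes the competing $\bbh^2\times\bbr$ geometry, after which the same references give the homogeneous realization. Your route buys self-containedness — no appeal to Perelman is needed, since the hypothesis is already ``Seifert fibered'' — and it explains \emph{why} only $(2,3,5)$ escapes the hyperbolic case, which the paper leaves implicit. The one delicate step, which you correctly flag and correctly source, is that geometrization alone only exhibits $M^3$ as a quotient of the model space by a discrete group of isometries; upgrading this to a homogeneous space $\widetilde{PSL}(2,\bbr)/\grG$ with $\grG$ a lattice in $\widetilde{PSL}(2,\bbr)$ itself is the finer result of Raymond--Vasquez and Neumann on which both you and the paper ultimately rely.
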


\subsection{The Orbifold Base}

We consider the homology 3-sphere as the Seifert bundle $S^1\ra{1.5} M^3\ra{1.5} B$ where $B$ is an orbifold. As will become evident below in all cases the base of the Seifert fibration is $S^2$ with an orbifold structure. We refer the reader to Chapter 4 of \cite{BG05} for the basics of orbifolds. In order to work with orbifold cohomology classes we consider Haefliger's \cite{Hae84} classifying space $\mathsf{B}B$ of an orbifold $B$. We represent the orbifold $B$ by a proper \'etale Lie groupoid $\calg$ (or any Morita equivalent Lie groupoid) and let $\mathsf{B}B$ denote the classifying space of $\calg$, then the integral orbifold cohomology groups and homotopy groups are defined by $H^i_{orb}(B,\bbz)=H^i(\mathsf{B}B,\bbz)$ and $\pi_i^{orb}(B,\bbz)=\pi_i(\mathsf{B}B,\bbz)$, see Section 4.3 of \cite{BG05} for further details. 

We have

\begin{lemma}\label{piorb} 
Let $M^3$ be a Seifert fibered homology 3-sphere that is not the standard sphere $S^3$. Then $\pi_1^{orb}(B)$ is a quotient of $\pi_1(M^3)$, and hence, perfect. If $M^3$ is not the Poincar\'e sphere, then $\pi_i^{orb}(B)=0$ for $i\geq 2$ and $\pi_1^{orb}(B)\approx \pi_1(M^3)/\bbz$. If $M^3=S^3/\bbi^*$, then $\pi_2^{orb}(B)\approx \bbz$, $\pi_i^{orb}(B)\approx \pi_i(S^3/\bbi^*)\approx \pi_i(S^3)$ for $i\geq 3$, and $\pi_1^{orb}(B)$ is $\bbi$.
\end{lemma}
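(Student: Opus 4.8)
The plan is to use the long exact homotopy sequence of the Seifert fibration $S^1 \ra{1.5} M^3 \ra{1.5} B$, interpreted in the orbifold category via Haefliger's classifying space $\mathsf{B}B$. The key point is that a Seifert fibration with an effective fixed-point-free circle action gives, at the level of classifying spaces, a fibration sequence $\mathsf{B}S^1 \ra{1.5} M^3 \ra{1.5} \mathsf{B}B$, where $\mathsf{B}S^1 \simeq \bbc\bbp^\infty = K(\bbz,2)$ is the classifying space of the structure circle. (Here $M^3$ is an honest manifold since the circle action on a homology sphere with finite isotropy still has $M^3$ smooth.) Thus I would first write down the associated long exact sequence of homotopy groups
\begin{equation}\label{les}
\cdots \ra{1.5} \pi_i(K(\bbz,2)) \ra{1.5} \pi_i(M^3) \ra{1.5} \pi_i^{orb}(B) \ra{1.5} \pi_{i-1}(K(\bbz,2)) \ra{1.5} \cdots
\end{equation}
and recall that $\pi_2(K(\bbz,2)) \approx \bbz$ while $\pi_i(K(\bbz,2)) = 0$ for $i \neq 2$.

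Second, I would read off the consequences degree by degree. Since $\pi_i(K(\bbz,2))$ vanishes except in degree $2$, for every $i \geq 3$ the sequence gives an isomorphism $\pi_i(M^3) \xrightarrow{\sim} \pi_i^{orb}(B)$, and in low degrees we get the five-term tail
\begin{equation}\label{fiveterm}
\pi_2(M^3) \ra{1.5} \pi_2^{orb}(B) \ra{1.5} \bbz \ra{1.5} \pi_1(M^3) \ra{1.5} \pi_1^{orb}(B) \ra{1.5} 0.
\end{equation}
The rightmost surjection already shows $\pi_1^{orb}(B)$ is a quotient of $\pi_1(M^3)$, and a quotient of a perfect group is perfect, giving the first assertion. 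For the case when $M^3$ is not the Poincar\'e sphere, Proposition \ref{M3asph} tells us $M^3$ is aspherical, so $\pi_i(M^3) = 0$ for all $i \geq 2$; feeding this into \eqref{les} forces $\pi_i^{orb}(B) = 0$ for $i \geq 2$, and the tail \eqref{fiveterm} collapses to $0 \ra{1.5} \bbz \ra{1.5} \pi_1(M^3) \ra{1.5} \pi_1^{orb}(B) \ra{1.5} 0$, i.e. $\pi_1^{orb}(B) \approx \pi_1(M^3)/\bbz$.

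Third, for the Poincar\'e sphere $M^3 = S^3/\bbi^*$, I would use that its universal cover is $S^3$, so $\pi_i(S^3/\bbi^*) \approx \pi_i(S^3)$ for all $i \geq 2$ and $\pi_1(S^3/\bbi^*) \approx \bbi^*$ is finite. In particular $\pi_2(M^3) = \pi_2(S^3) = 0$, so the left end of \eqref{fiveterm} begins $0 \ra{1.5} \pi_2^{orb}(B) \ra{1.5} \bbz \ra{1.5} \bbi^* \ra{1.5} \pi_1^{orb}(B) \ra{1.5} 0$. The connecting homomorphism $\bbz \ra{1.5} \pi_1(M^3) = \bbi^*$ records the Euler class / the image of the generic circle fiber in $\pi_1$; since $\bbi^*$ is finite while $\bbz$ is infinite, this map has infinite cyclic kernel and hence cannot be injective, so $\pi_2^{orb}(B) \approx \ker \neq 0$, and being a subgroup of $\bbz$ it is $\approx \bbz$. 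The isomorphisms $\pi_i^{orb}(B) \approx \pi_i(M^3) \approx \pi_i(S^3)$ for $i \geq 3$ come directly from the degree-$i$ portion of \eqref{les}. Finally $\pi_1^{orb}(B) \approx \bbi^*/(\text{image of }\bbz)$; since the image is a normal cyclic subgroup of the perfect group $\bbi^*$ and the abelianization of $\bbi^*$ is trivial, the resulting quotient is the quotient of $\bbi^*$ by the center $\bbz/2$, which is the simple icosahedral group $\bbi$ of order $60$.

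The main obstacle, and the step requiring the most care, is justifying the fibration sequence $K(\bbz,2) \ra{1.5} M^3 \ra{1.5} \mathsf{B}B$ at the level of Haefliger classifying spaces and correctly identifying the connecting map $\bbz \ra{1.5} \pi_1(M^3)$ as the one detecting the orbifold Euler class of the Seifert structure. One must verify that $\mathsf{B}S^1$ contributes exactly a $K(\bbz,2)$ and that the map $M^3 \ra{1.5} \mathsf{B}B$ classifying the Seifert bundle is genuinely a homotopy fibration (not merely a map inducing the expected groups), so that the long exact sequence \eqref{les} is valid with no extension ambiguities; the low-degree analysis of the connecting homomorphism is where the distinction between the Poincar\'e-sphere and aspherical cases is encoded, and where the precise identification of $\pi_1^{orb}(B)$ as $\bbi$ versus $\pi_1(M^3)/\bbz$ must be pinned down.
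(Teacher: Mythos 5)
Your overall strategy and case division coincide with the paper's own proof: both rest on the low-degree exact homotopy sequence of the Seifert fibration $S^1\ra{1.5}M^3\ra{1.5}B$ interpreted via Haefliger's classifying space, which is Equation (\ref{3sphexhom}) of the paper. However, there are genuine gaps at precisely the two points where the paper has to invoke outside input. First, in the non--Poincar\'e case you claim that asphericity of $M^3$ ``forces'' $\pi_i^{orb}(B)=0$ for $i\geq 2$. For $i\geq 3$ this is fine, but for $i=2$ exactness only gives $\pi_2^{orb}(B)\cong\ker\bigl(\psi\colon\bbz\to\pi_1(M^3)\bigr)$, and nothing in the sequence prevents $\psi$ from being, say, the trivial map, in which case $\pi_2^{orb}(B)\cong\bbz$ and $\pi_1^{orb}(B)\cong\pi_1(M^3)$. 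What is needed is that the class of the generic fiber has infinite order in $\pi_1(M^3)$; the paper obtains injectivity of $\psi$ from Lemma 14.3.1 of \cite{LeRa10}, using that $\pi_1(M^3)$ is infinite. Second, in the Poincar\'e case your group theory cannot decide between $\bbi$ and $\bbi^*$. The image of $\psi$ is a cyclic normal subgroup of $\bbi^*$, hence either trivial or the center $\bbz_2$; both resulting quotients, $\bbi^*$ and $\bbi$, are perfect with trivial abelianization, so your appeal to perfectness does not single out the center. The paper states this dichotomy explicitly (``$\pi_1^{orb}(B)$ is either $\bbi^*$ or $\bbi$'') and resolves it geometrically, following \cite{Zim11}: the Poincar\'e sphere is the link $L(2,3,5)$, so $B$ is a branched quotient $S^2/G$ with branch indices $(2,3,5)$, and the orbifold Riemann--Hurwitz formula forces $|G|=60$, i.e.\ $G=\bbi$. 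Some such geometric input --- equivalently, that the orbifold fundamental group of $S^2$ with cone points of orders $2,3,5$ is the $(2,3,5)$ von Dyck group $\cong\bbi$, or that $\bbi^*$ admits no effective action on $S^2$ --- is indispensable, and your proposal supplies none.

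There is also a technical slip in your setup: the homotopy fiber of $M^3\ra{1.5}\mathsf{B}B$ is $S^1$, not $\mathsf{B}S^1=K(\bbz,2)$; the correct statement is that $M^3$ is the homotopy fiber of the map $\mathsf{B}B\ra{1.5}\mathsf{B}S^1$ coming from the Borel construction. As written, your long exact sequence (with $K(\bbz,2)$ as the fiber of $M^3\to\mathsf{B}B$) is inconsistent with the five-term tail you then extract from it: with your stated fibration the $\bbz$ would sit to the left of $\pi_2(M^3)$, the connecting map $\pi_3^{orb}(B)\to\bbz$ would obstruct the isomorphisms in degree $3$, and in low degrees you would conclude $\pi_1(M^3)\cong\pi_1^{orb}(B)$, which is false. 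The five-term sequence you actually use is the correct one (it agrees with (\ref{3sphexhom})), so this is fixable, but the derivation must be rewritten with $S^1$ as the fiber.
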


\begin{proof}
Since in all cases $\pi_2(M^3)=0$ we have the homotopy exact sequence
\begin{equation}\label{3sphexhom}
0\ra{1.8}\pi_2^{orb}(B)\fract{\gri}{\ra{1.8}} \bbz\fract{\psi}{\ra{1.8}} \pi_1(M^3)\ra{1.8} \pi_1^{orb}(B)\ra{1.8} \{1\}
\end{equation}
which proves the first statement. If $M^3$ is not the Poincar\'e sphere then $\pi_1(M^3)$ is infinite, so by Lemma 14.3.1 of \cite{LeRa10} $\psi$ is injective which proves the second statement. If $M^3$ is the Poincar\'e sphere then $\pi_1(M^3)\approx \bbi^*$ and Lemma 14.3.1 of \cite{LeRa10} says that $B$ is $S^2$. From the exact sequence (\ref{3sphexhom}) and the fact that $\pi_1(M^3)=\bbi^*$ we see that $\pi_1^{orb}(B)$ is either $\bbi^*$ or $\bbi$. We claim that it must be $\bbi$. The following argument is taken from \cite{Zim11}. The orbifold structure of $B$ is described by the branched cover $S^2\ra{1.7}S^2/G$ where $G$ is either $\bbi$ or $\bbi^*$. Now as we shall see shortly the Poincar\'e sphere can be represented by the link $L(2,3,5)$ and  the orbifold Riemann-Hurwitz formula is
$$\chi(S^2)=|G|\bigl(2-2g-\sum_i(1-\frac{1}{m_i})\bigr)$$
where $\chi$ is Euler characteristic and $g$ is the genus of the Riemann surface, so we have $\chi(S^2)=2$, $g=0$, and $(m_1,m_2,m_3)=(2,3,5)$. Thus, the formula gives $|G|=60$, so $G=\bbi$. Then from the exact sequence (\ref{3sphexhom}) we see that $\pi_2^{orb}(B)\approx \bbz$ and the map $\gri$ is multiplication by $2$.
\end{proof}

In all cases in this paper the 2-dimensional orbifold $B$ is {\it developable}, that is, it is a global quotient, namely, $B=S^2/\pi_1^{orb}(B)$.

A result of Neumann and Raymond \cite{NeRa78} says that up to orientation any Seifert fibered homology 3-sphere can be realized as a link of complete intersections of generalized Brieskorn manifolds \cite{Ran75}. Moreover, it follows from Section 9.6 of \cite{BG05} that any such link admits a Sasakian structure. 
Seifert fibrations as complete intersections is also treated in \cite{Sav02,Loo84} as well as recently in Section 14.11 of \cite{LeRa10}.

\subsection{Complete Intersections of Brieskorn Manifolds}\label{secbrhomsph}
Here we essentially follow Section 14.11 of \cite{LeRa10}. Let $\bfa=(a_0,\cdots,a_n)$ be a sequence of integers with $a_i\geq 2$ and let $C=(c_{ij})$ be an $n-1$ by $n+1$ matrix of complex numbers. We also assume that each $n-1$ by $n-1$ minor determinant of $C$ is nonzero. Then the complex variety 
\begin{equation}\label{genBrvar}
V_C(\bfa) =\{z\in \bbc^{n+1}~|~f_i=c_{i0}z_0^{a_0}+\cdots +c_{in}z_n^{a_n}=0,~i=0,\cdots,n-2\}
\end{equation}
is nonsingular away from the origin in $\bbc^{n+1}$, and the link
\begin{equation}\label{genBrlnk}
L(\bfa)=V_C(\bfa)\cap S^{2n+1}
\end{equation}
is a smooth 3-dimensional manifold which is independent of $C$ up to diffeomorphism. Note that $V_C$ has a $\bbc^*$ action defined by $z\mapsto (\grl^{w_0}z_0,\cdots,\grl^{w_n}z_n)$ with weights 
$$w_j=\frac{\lcm(a_0,\cdots,a_n)}{a_j}.$$
The unit circle $S^1\subset \bbc^*$ acts on the link $L(\bfa)$ without fixed points, so it is a Seifert manifold.
The Seifert invariants are given on page 336 of \cite{LeRa10}. The unnormalized invariants are 
$$M=\{\go,g,0,0,s_1(\gra_0,\grb_0),\ldots,s_n(\gra_n,\grb_n\}$$ 
where
$$\gra_j=\frac{\lcm(a_0,\cdots,a_n)}{\lcm(a_0,\cdots,\widehat{a}_j,\cdots,a_n)},\quad s_j= \frac{a_0a_1\cdots\widehat{a}_j\cdots a_n}{\lcm(a_0,\cdots,\widehat{a}_j,\cdots,a_n)}$$
$$g=\frac{1}{2}\bigl(2+(n-1)\frac{a_0a_1\cdots a_n}{\lcm(a_0,\cdots,a_n)} -\sum_{j=0}^ns_j\bigr)$$
and $\grb_j$ and $e$ are determined by 
$$-e(M)=\sum_{j=0}^ns_j\frac{\grb_j}{\gra_j} =\frac{a_0a_1\cdots a_n}{\lcm(a_0,\cdots,a_n)^2}.$$
This last equation becomes 
\begin{equation}\label{beqn}
\sum_{j=0}^n\grb_jw_j=1.
\end{equation}
Here $g$ is the genus of the base Riemann surface. Note that the variety $V_C$ is not the most general type of complete intersection even for dimension 3. Generally, complete intersections have a multidegree $\bfd=(d_0,\cdots,d_{n-2})$, but in our case here the degrees $d_i$ are all equal, namely $d=\lcm(a_0,\cdots,a_n)$. 
We define the weight vector $\bfw=(w_0,\cdots,w_n)$ and $|\bfw |=\sum_j w_j$.  
The link $L(a_0,\ldots,a_n)$ is the total space of an $S^1$ orbibundle over a projective algebraic orbifold $\calz$, and $e(M)$ is the orbifold Euler number of the orbibundle. Note that the Euler number of the orbifold canonical bundle of this orbifold is  $\frac{d-|\bfw|}{d}$. We now consider the main case of our interest, namely, \cite{NeRa78}

\begin{lemma}\label{homsph}
The link $L(\bfa)$ is a homology 3-sphere if and only if the integers $a_0,\ldots,a_n$ are pairwise relatively prime.
\end{lemma}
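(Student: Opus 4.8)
The plan is to reduce the statement to a computation of the first integral homology group and then to translate the vanishing of that group into a condition on the $a_j$. Since the paper has already recorded that $L(\bfa)$ is a smooth closed orientable $3$-manifold carrying a Seifert fibration with the listed invariants, I would begin by noting that a closed orientable $3$-manifold $M$ is an integral homology sphere if and only if $H_1(M;\bbz)=0$: Poincar\'e duality gives $H_2(M;\bbz)\cong H^1(M;\bbz)$, and the universal coefficient theorem identifies the latter with the free part of $H_1(M;\bbz)$, so $H_1=0$ already forces $H_2=0$. Thus everything reduces to deciding when $H_1(L(\bfa);\bbz)=0$.

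Next I would extract $H_1$ from the Seifert data $\{\go,g,0,0,s_0(\alpha_0,\beta_0),\ldots,s_n(\alpha_n,\beta_n)\}$. First, the base genus must vanish: the projection onto the base Riemann surface of genus $g$ induces a surjection $H_1(L(\bfa);\bbq)\twoheadrightarrow\bbq^{2g}$, so $g>0$ would give $b_1>0$. Once $g=0$, the standard presentation of $\pi_1$ of a Seifert fibration over $S^2$ abelianizes to a square integer matrix with one row $\alpha x+\beta h$ for each of the $\sum_j s_j$ exceptional fibers together with the section relation; since $e(M)\neq 0$ its determinant is nonzero, so $H_1$ is finite of order equal, up to sign, to $\prod_j\alpha_j^{\,s_j}\,e(M)$. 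Using $e(M)=-P/d^{2}$ with $P=a_0\cdots a_n$ and $d=\lcm(a_0,\ldots,a_n)$, this gives
\[
|H_1(L(\bfa);\bbz)|=\frac{P}{d^{2}}\,\prod_{j=0}^{n}\alpha_j^{\,s_j},
\]
so $L(\bfa)$ is a homology sphere precisely when $g=0$ and this integer equals $1$. The easy direction is then immediate: if the $a_j$ are pairwise relatively prime then $d=P$ and $L_j:=\lcm(a_0,\ldots,\widehat{a}_j,\ldots,a_n)=P/a_j$, hence $\alpha_j=d/L_j=a_j$ and $s_j=1$ for every $j$; the genus formula collapses to $g=\tfrac12(2+(n-1)-(n+1))=0$, and the displayed product is $\tfrac{P}{P^{2}}\,a_0\cdots a_n=1$.

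For the converse I would argue by contradiction through $p$-adic valuations. Writing $\alpha_j=a_j/\gcd(a_j,L_j)$ and $s_j=P/\bigl(d\,\gcd(a_j,L_j)\bigr)$, a prime-by-prime count gives, after sorting the exponents $v_p(a_0)\ge\cdots\ge v_p(a_n)=:e_0\ge\cdots\ge e_n$,
\[
v_p\!\left(\tfrac{P}{d^{2}}\textstyle\prod_j\alpha_j^{\,s_j}\right)=(s_{(0)}-1)(e_0-e_1)+\sum_{j\ge 2}e_j\ \ge\ 0 ,
\]
where $s_{(0)}$ is the multiplicity attached to a top index (the value being independent of the choice when the maximum is tied). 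Hence $|H_1|=1$ forces, for every prime $p$, that at most two of the $a_j$ are divisible by $p$ and that no index which is the strict $p$-adic maximum can carry multiplicity $s>1$. Encoding the shared primes as a graph on $\{0,\ldots,n\}$, these constraints together with $a_j\ge 2$ (so every vertex is nonisolated) push the non-coprime configurations into divisor/``star'' or cyclic shapes in which each exceptional multiplicity is explained by divisibilities $a_i\mid a_k$. The final step is to feed such a configuration into the genus formula, rewritten as $\sum_j\gcd(a_j,L_j)^{-1}=2d/P+(n-1)$, and to check that it yields $g>0$.

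I expect this last coupling to be the main obstacle. The subtlety is that the exceptional-fiber multiplicities $s_j$ mix the different primes, so the homology-order computation alone does not register a shared factor directly: for instance the innocuous-looking $(12,2,3)$ gives $|H_1|$-order $=1$, yet $g=1$, and is therefore excluded only by the genus. One must combine the valuation analysis with the genus constraint to eliminate exactly the degenerate fibrations that a failure of pairwise coprimality produces, and assembling these cases shows that $g=0$ and $|H_1|=1$ can hold simultaneously only when every prime divides at most one $a_j$, i.e.\ the $a_j$ are pairwise relatively prime.
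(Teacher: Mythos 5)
The first thing to note is that the paper does not prove this lemma at all: it is quoted directly from Neumann--Raymond (the citation \cite{NeRa78} immediately precedes the statement), so your attempt must be judged as a standalone argument rather than against an in-paper proof. Your setup and the forward implication are correct and complete: the reduction to $H_1(L(\bfa);\bbz)=0$, the formula $|H_1|=|e(M)|\prod_j\alpha_j^{s_j}$ for a genus-zero base with $e(M)\neq 0$, and your valuation identity $v_p\bigl(\tfrac{P}{d^2}\prod_j\alpha_j^{s_j}\bigr)=(s_{(0)}-1)(e_0-e_1)+\sum_{j\geq 2}e_j$ all check out (I verified the last one, using $\alpha_j=d/L_j$ and $v_p(L_j)=\max_{i\neq j}v_p(a_i)$), and pairwise coprimality does collapse everything to $g=0$ and $|H_1|=1$.

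The genuine gap is the converse, which is never actually proved. After extracting the (correct) constraints ``each prime divides at most two $a_j$'' and ``no strict $p$-adic maximum carries $s_j>1$,'' what remains is precisely the hard part, and your sketch both misdescribes and then simply asserts it. Tuples far from pairwise coprime survive \emph{all} of your valuation constraints with trivial torsion product: for $\bfa=(6,10,15)$ every prime divides exactly two of the $a_j$ with equal exponents, all $\alpha_j=1$, so $\tfrac{P}{d^2}\prod_j\alpha_j^{s_j}=1$, and the tuple is excluded only by the genus ($s_0=5$, $s_1=3$, $s_2=2$ gives $g=11$). This example also refutes your structural claim that the surviving configurations are ``divisor/star or cyclic shapes in which each exceptional multiplicity is explained by divisibilities $a_i\mid a_k$'' --- no divisibility relations hold among $6,10,15$ --- so the proposed classification is wrong as stated. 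The concluding sentence (``assembling these cases shows that $g=0$ and $|H_1|=1$ can hold simultaneously only when every prime divides at most one $a_j$'') is a restatement of the lemma, not an argument; you yourself flag the coupling as ``the main obstacle,'' and it is left open. A workable completion exists along your lines --- e.g., first show that if $\gcd(a_i,a_k)=c>1$ then every other index $m$ has $s_m\geq c>1$ (because $\lcm_{j\neq m}a_j$ divides $\tfrac1c\prod_{j\neq m}a_j$), so by your valuation constraint the sharing graph can have no isolated vertices, while the genus identity $\sum_j\gcd(a_j,L_j)^{-1}=2d/P+(n-1)$ bounds the number of non-isolated vertices by three; this forces $n=2$ and a short residual case analysis --- but none of that work appears in the proposal.
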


In this case the weights are $w_j=a_0\cdots \widehat{a}_j\cdots a_n$ and the degree $d=a_0\cdots a_n$ which satisfies $d=w_ja_j$ for all $j$. (This last equation holds in generally for Brieskorn manifolds).
So in the case of homology spheres we have 
\begin{equation}\label{invhomsph}
\gra_j=a_j,\quad s_j=1,\quad g=0. \quad -e(M) =\frac{1}{a_0a_1\cdots a_n} =\frac{1}{d}.
\end{equation}
The equation for $\grb_j$ becomes
$$\sum_{j=0}^n\frac{\grb_j}{a_j}=\frac{1}{a_0a_1\cdots a_n}.$$
So for homology 3-spheres we can take 
$$\grb_j=\frac{a_j}{(n+1)d}.$$

As mentioned previously every Seifert fibered manifold that is an integer homology sphere is diffeomorphic up to orientation to a Brieskorn complete intersection $L(a_0,\ldots,a_n)$ \cite{NeRa78}. Furthermore, they are all homogeneous manifolds which except for the Poincar\'e sphere have the form $\widetilde{PSL}(2,\bbr)/\grG$ where $\grG$ is an infinite discrete group such that $[\grG,\grG]=\grG$. The Poincar\'e sphere is represented by $\bfa=(2,3,5)$. It is the homogeneous space $SU(2)/\bbi^*$ where $\bbi^*$ is the binary icosahedral group, a finite group of order $120$. Conditions on the Seifert invariants that a given Seifert manifold can be represented as $\widetilde{PSL}(2,\bbr)/\grG$ where $\grG$ is a cocompact discrete subgroup of $\widetilde{PSL}(2,\bbr)$ were given in \cite{RaVa81}. Also the fact that any complete intersection link of the form (\ref{genBrlnk}) is a homogeneous space $\widetilde{PSL}(2,\bbr)/\grG$ or $SU(2)/\bbi^*$ was proved in \cite{Neu77}.

\begin{example}\label{n2n3}
When $n=2$ we have the Brieskorn 3-manifolds $L(a_0,a_1,a_2)$ which were treated extensively in \cite{Mil75}.
When $n=3$ we can redefine the coordinates giving the complete intersection
$$z_0^{a_0}+z_1^{a_1}+z_2^{a_2}+z_3^{a_3}=0,\qquad c_0z_0^{a_0}+c_1z_1^{a_1}+c_2z_2^{a_2}+c_3z_3^{a_3}=0.$$
The condition on the minor determinants is $c_i\neq c_j$ for all $i\neq j=0,\cdots, 3$.
\end{example}

\subsection{Sasakian Structures on Homology Spheres}
It follows from Proposition 9.6.1 of \cite{BG05} that the links $L(\bfa)$ admit a quasiregular Sasakian structure induced from the Sasakian structure on $S^{2n+1}$ as a complete intersection. Now $L(\bfa)$ is the total space of an $S^1$ orbibundle over a projective algebraic variety $\calz_\bfw$ of complex dimension one, that is, a Riemann surface embedded as a complete intersection in the weighted projective space $\bbc\bbp(\bfw)$. Given the form of the weights for homology spheres, we see that $\bbc\bbp(\bfw)$ is isomorphic as a projective variety to $\bbc\bbp^n$ and $\calz_\bfw$ is isomorphic to $\bbc\bbp^1$ but with a non-trivial orbifold structure.

Recall from Definition 7.5.24 of \cite{BG05} that a Sasakian structure $\cals=(\xi,\eta,\Phi,g)$ is {\it positive (negative)} if the basic first Chern class $c_1^B(\calf_\xi)$ can be represented by a positive (negative) definite $(1,1)$ form.

\begin{lemma}\label{sastype}
Let $L(\bfa)$ be a homology 3-sphere. Then except for the Poincar\'e sphere with link $L(2,3,5)$, the Sasakian structure on the link $L(\bfa)$ is negative. The Sasakian structure on the Poincar\'e sphere $L(2,3,5)$ is positive.
\end{lemma}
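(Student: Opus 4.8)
The plan is to reduce the sign of the Sasakian structure, which by the definition recalled above is the sign of the basic first Chern class $c_1^B(\calf_\xi)$, first to a property of the base orbifold $\calz_\bfw$ and then to a purely arithmetic condition on $\bfa$. Since the induced structure is quasiregular, $c_1^B(\calf_\xi)$ is the pullback under the orbibundle projection of the orbifold first Chern class $c_1^{orb}(\calz_\bfw)$, i.e. the orbifold anticanonical class of the base. As recorded above, $\calz_\bfw$ is the curve orbifold $\bbc\bbp^1$ carrying the Seifert data of (\ref{invhomsph}): genus $g=0$ with $n+1$ cone points of orders $\gra_0=a_0,\ldots,\gra_n=a_n$. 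For a complex one-dimensional orbifold the sign of $c_1^{orb}(\calz_\bfw)$ (whether it is representable by a positive or negative $(1,1)$-form) coincides with the sign of the orbifold Euler characteristic, which by orbifold Gauss--Bonnet is
\[
\chi^{orb}(\calz_\bfw)=2-\sum_{j=0}^{n}\Bigl(1-\frac{1}{a_j}\Bigr)=(1-n)+\sum_{j=0}^{n}\frac{1}{a_j}.
\]
Hence $L(\bfa)$ is positive precisely when $\sum_{j}1/a_j>n-1$, negative precisely when $\sum_{j}1/a_j<n-1$, and would be transversely Calabi--Yau (null) exactly when equality holds.

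It then remains to show, using that by Lemma \ref{homsph} the $a_j$ are \emph{pairwise coprime} integers $\geq 2$, that $\sum_{j=0}^n 1/a_j>n-1$ holds only for $\bfa=(2,3,5)$, and that equality never occurs. For $n\geq 4$ the crude bound $\sum_j 1/a_j\leq (n+1)/2<n-1$ settles both at once. For $n=3$ pairwise coprimality forces at most one even entry, whence $\sum_j 1/a_j\leq \tfrac12+\tfrac13+\tfrac15+\tfrac17<2=n-1$, so every link with $n\geq 3$ is negative. The case $n=2$ is the crux: one checks directly that $a_0\geq 3$ already forces $\sum<1$; that $a_0=2$ forces $a_1=3$ (if $a_1\geq5$ then $\tfrac12+\tfrac15+\tfrac17>\sum$ is already $<1$); and that $a_1=3$ forces $a_2\leq 6$, where coprimality eliminates $a_2=4$ and $a_2=6$ and leaves only $a_2=5$. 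This isolates $(2,3,5)$, for which $\sum=\tfrac{31}{30}>1$, as the unique positive link, rules out any null case, and shows all remaining coprime triples are hyperbolic, hence negative.

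Finally I would record the geometric cross-check furnished by Proposition \ref{M3asph}: every homology $3$-sphere link other than $L(2,3,5)$ is a quotient $\widetilde{\rm PSL}(2,\bbr)/\grG$, whose Seifert base is a hyperbolic $2$-orbifold ($\chi^{orb}<0$), while the Poincar\'e sphere is $SU(2)/\bbi^{\ast}$ with spherical base ($\chi^{orb}>0$); this is consistent with the arithmetic and independently confirms the absence of a null case. The only genuinely delicate point is the $n=2$ classification, where pairwise coprimality must be invoked to eliminate the spherical triples $(2,2,m),(2,3,3),(2,3,4)$ and the Euclidean triples $(2,3,6),(2,4,4),(3,3,3)$; the reduction to $\chi^{orb}$ and the bounds for $n\geq 3$ are routine.
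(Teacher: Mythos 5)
Your proof is correct, and it reaches the same underlying reduction as the paper: the sign of the Sasakian structure is the sign of $c_1^{orb}(\calz_\bfw)$, and your orbifold Euler characteristic $\chi^{orb}(\calz_\bfw)=(1-n)+\sum_j 1/a_j$ is literally the same number as the coefficient $\frac{1}{d}\bigl(|\bfw|-(n-1)d\bigr)$ appearing in the paper's Equation (\ref{c1orb}), since $w_j=d/a_j$. The difference lies entirely in how the arithmetic is finished. The paper disposes of $n=2$ by citing Milnor \cite{Mil75} for the fact that $L(2,3,5)$ is the only positive non-standard case, and handles all $n\geq 3$ with a single product inequality: ordering $a_0<a_1<\cdots<a_n$ (strictness coming from pairwise coprimality), it bounds $|\bfd|-|\bfw| > \bigl((n-1)a_0-(n+1)\bigr)a_1\cdots a_n \geq (n-3)a_1\cdots a_n\geq 0$. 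You instead work with reciprocal sums and split into three regimes: a crude bound $\sum_j 1/a_j\leq (n+1)/2<n-1$ for $n\geq 4$; an appeal to coprimality (at most one even entry) to get $\sum_j 1/a_j\leq \frac12+\frac13+\frac15+\frac17<2$ for $n=3$; and a self-contained classification of coprime triples for $n=2$, eliminating the spherical and Euclidean triples by coprimality and isolating $(2,3,5)$. What your route buys is self-containment — you replace the citation to Milnor with a short elementary case analysis, and you make the absence of a null (transversely Calabi--Yau) case explicit in all dimensions — at the cost of a three-way case split where the paper's product estimate covers all $n\geq 3$ at one stroke. Both arguments use pairwise coprimality in an essential way, though in different places: the paper needs it for the strict ordering that makes its inequality strict, while you need it to control the number of even entries and to kill the exceptional triples.
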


\begin{proof}
Note that the basic first Chern class $c_1^B(\calf_\xi)$ is the pullback of the orbifold first Chern class which is $\frac{|\bfw |-|\bfd |}{d}$ times the area form $\gra$ on $\bbc\bbp^1$. Thus, we have
\begin{equation}\label{c1orb}
c_1^{orb}(\calz_\bfw)=\frac{1}{d}\bigl(\sum_jw_j-(n-1)a_0\cdots a_n\bigr)\gra.
\end{equation}
If $n=2$ then it is well known \cite{Mil75} and easy to see that $L(2,3,5)$ is the only non-standard homology sphere for which $c_1>0$. So we assume that $n\geq 3$ and without loss of generality we can assume that $a_0<a_1 <\cdots <a_n$ and $a_j\geq 2$. We then have
\begin{eqnarray*}
|\bfd |-|\bfw |&=&(n-1)a_0\cdots a_n-\sum_ja_0\cdots \widehat{a}_j\cdots a_n \\
  &>& (n-1)a_0\cdots a_n -(n+1)a_1\cdots a_n \\
 &=& \bigl((n-1)a_0-(n+1)\bigr)a_1\cdots a_n \geq (n-3)a_1\cdots a_n\geq 0.
\end{eqnarray*}
\end{proof}

When $M^3$ is a homology sphere with infinite fundamental group arising from the link $L(\bfa)$, the base $B=B(\bfa)$ is $S^2$ with an orbifold structure consisting of $n+1$ orbifold points. So if $M^3$ is not the Poincar\'e sphere, or equivalently that $\pi_1(M^3)$ is infinite, there are $n+1$ singular orbits corresponding to setting $z_j=0$ where $j=0,\ldots,n$. The order of the isotropy subgroup when $z_j=0$ is $a_j$, so the total order of the orbifold quotient is $d_\bfa=a_0\cdots a_n$ which coincides with the order $\upsilon$ of the Sasakian structure.

We are now ready for Belgun's theorem. The version given in Theorem 10.1.3 of \cite{BG05} is more convenient for our purposes, and we give only what we need here.  
\begin{theorem}[\cite{Bel01}]\label{Belthm}
Let $M$ be a $3$-dimensional compact manifold admitting a Sasakian
structure $\cals=(\xi,\eta,\Phi,g).$ Then
\begin{enumerate}
\item If $\cals$ is positive, $M$ is spherical, and there is a
Sasakian metric of constant $\Phi$-sectional curvature $1$ in the
same deformation class as $g.$ 
\item If $\cals$ is negative, $M$ is of $\widetilde{PSL}(2,\bbr)$ type, and there is a Sasakian metric of constant $\Phi$-sectional curvature $-4$ in the same deformation
class as $g.$ 
\end{enumerate}
\end{theorem}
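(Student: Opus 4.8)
The plan is to pass to the transverse K\"ahler geometry, carry out a uniformization argument in complex dimension one, and then lift the result back to the Sasakian structure. Since $\dim M=3$, the transverse K\"ahler structure of the Reeb foliation $\calf_\xi$ has complex dimension one, so it is carried by a (possibly orbifold) Riemann surface: in the quasi-regular case this is the quotient $\calz$, while in the irregular case one argues directly with the transverse holomorphic structure of $\calf_\xi$, invoking Rukimbira's approximation theorem \cite{Ruk95a} to reduce to the quasi-regular case when convenient. By definition the sign of $\cals$ is the sign of the basic first Chern class $c_1^B(\calf_\xi)$, which in complex dimension one is simply the sign of the (orbifold) Euler characteristic of the base. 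Thus ``positive'' corresponds to positive transverse curvature and ``negative'' to negative transverse curvature, and this is what produces the dichotomy.

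First I would fix the transverse complex structure $\bJ$ and the Reeb field $\xi$ and deform inside the family ${\mathcal S}(\xi,\bJ)$. As recorded in the excerpt, a deformation $\eta\mapsto\eta+t\grz$ moves the transverse K\"ahler form within its fixed basic class $[d\eta]_B$, adding a basic $\partial_B\overline{\partial}_B$-term to the metric. The heart of the argument is the transverse uniformization step: within this fixed basic K\"ahler class I would solve a prescribed-curvature equation to produce a transverse metric of constant Gaussian (equivalently, constant holomorphic sectional) curvature. In complex dimension one this reduces to a semilinear elliptic equation of Kazdan--Warner (Liouville) type, schematically $\Delta_B\grf=\lambda-\mu\,e^{2\grf}$ with $\lambda,\mu$ determined by the class, whose solvability and the sign of the resulting constant curvature are governed entirely by the sign of the Euler characteristic, i.e.\ by the sign of $c_1^B(\calf_\xi)$. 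I expect this to be the main obstacle: besides the analysis one must accommodate the orbifold points of the base (the special fibers of $\calf_\xi$) and, in the irregular case, argue transversely rather than on an honest manifold.

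Having produced a transverse metric of constant curvature, I would translate back using the dictionary set up in the excerpt. In complex dimension one constant curvature is the same as transverse K\"ahler--Einstein, a transverse K\"ahler--Einstein metric lifts to a Sasaki-$\eta$-Einstein metric, and in dimension three being $\eta$-Einstein is equivalent to having constant $\Phi$-sectional curvature \cite{BGM06}. Hence the deformed structure already carries constant $\Phi$-sectional curvature of the correct sign relative to $-3$. To pin down the precise values $1$ and $-4$ I would apply a transverse homothety \eqref{transhomot}, under which constant $\Phi$-sectional curvature $c$ becomes $c'=\frac{c+3}{a}-3$; since $c+3>0$ in the positive case and $c+3<0$ in the negative case, a suitable choice of $a\in\bbr^+$ realizes $c'=1$ and $c'=-4$ respectively. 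This $D$-homothetic deformation stays within the deformation class (ray) of $g$ in the sense intended by the theorem.

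Finally, the topological conclusions follow from the classification of complete, simply connected Sasakian space forms. Constant $\Phi$-sectional curvature $1$ forces the universal cover to be the standard Sasakian $S^3$, so $M$ is spherical, whereas constant $\Phi$-sectional curvature $-4$ forces the universal cover to be $\widetilde{PSL}(2,\bbr)$ with its standard Sasakian structure, so $M$ is of $\widetilde{PSL}(2,\bbr)$ type. The crux of the whole argument is the transverse uniformization of the second paragraph; the remaining steps are bookkeeping with the transverse dictionary and the homothety formula already established in the excerpt.
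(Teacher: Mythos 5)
A preliminary remark on the comparison: the paper contains no proof of this statement at all --- it is quoted (in the form of Theorem 10.1.3 of \cite{BG05}) from Belgun's paper \cite{Bel01} --- so your attempt can only be measured against Belgun's argument and the standard facts the paper records. Your skeleton (pass to the one-dimensional transverse K\"ahler geometry, uniformize, lift back, and fix the constants $1$ and $-4$ by a transverse homothety) is the right one, and your negative case essentially works: a negative Sasakian structure on a compact manifold is automatically quasi-regular, because the closure of the Reeb flow is a torus acting by Sasakian automorphisms and a negative transverse K\"ahler structure admits no nontrivial transverse holomorphic vector fields, so this torus is one-dimensional; the Reeb quotient is then a \emph{good} orbifold Riemann surface of negative orbifold Euler characteristic, where the negative-curvature Liouville equation is indeed solvable, and your dictionary (constant transverse curvature $\Leftrightarrow$ transverse K\"ahler--Einstein $\Leftrightarrow$ $\eta$-Einstein $\Leftrightarrow$ constant $\Phi$-sectional curvature in dimension $3$) finishes that case.

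The genuine gap is in the positive case, and it sits exactly at the step you flagged as ``the main obstacle'' and then waved through. It is false that solvability of the constant-curvature equation on a compact $2$-orbifold ``is governed entirely by the sign of the Euler characteristic'': a compact $2$-orbifold carries a constant curvature metric if and only if it is good (developable), and the bad orbifolds --- teardrops and spindles with unequal cone orders --- all have \emph{positive} orbifold Euler characteristic yet admit no constant curvature metric at all. These bad orbifolds genuinely occur as Reeb quotients of positive Sasakian $3$-manifolds: the weighted Sasakian structures on $S^3$, with Reeb field $\xi_{p,q}$ (relatively prime weights $p\neq q$) lying in the two-dimensional Sasaki cone of the standard CR structure, have quotient the spindle $\bbc\bbp^1(p,q)$ (and weights $(1,q)$ give a teardrop). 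For such a structure there is no transverse metric of constant curvature with that Reeb field, so no deformation inside your family ${\mathcal S}(\xi,\bar{J})$ --- i.e.\ no deformation $\eta\mapsto\eta+t\grz$ with $\grz$ basic --- can ever produce constant $\Phi$-sectional curvature. The theorem is nonetheless true because its ``deformation class'' permits the Reeb vector field itself to move (a type I deformation, e.g.\ sliding the weights $(p,q)$ to $(1,1)$ through the Sasaki cone; note that the Rukimbira approximation you invoke is also a deformation of $\xi$, hence inconsistent with your fixed-$\xi$ framework). Belgun's proof must, and does, treat this irregular/bad-orbifold positive case separately, identifying $M$ as $S^3$ or a lens space carrying a weighted structure that deforms to the round one. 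As written, your argument proves the negative case but not the positive one.
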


In the positive case the metric $g$ also has constant Riemannian sectional curvature $1$. Moreover, in case 1 of the theorem there is a ray of constant $\Phi$ sectional curvature $c$ with $c>-3$ which corresponds to constant transverse holomophic sectional curvature $c+3>0$. In case 2 there is a ray of constant $\Phi$ sectional curvature $c$ with $c<-3$ which corresponds to constant transverse holomorphic sectional curvature $c+3< 0$. Except for the standard sphere the automorphism group of $\cals$ is one dimensional consisting of the flow generated by the Reeb vector field. Thus, the Sasaki cone is one dimensional for these structures.

\subsection{Orbifold K\"ahler-Einstein Metrics on $B_\bfa$}\label{orbKE}

It follows from Theorem \ref{Belthm} and the orbifold Boothby-Wang Theorem \cite{BG00a} that $B_\bfa$ admits a K\"ahler-Einstein orbifold metric. For example for the Poincar\'e sphere with $\bfa=(2,3,5)$, the orbifold first Chern class of the base orbifold $B_\bfa$ is $c_1^{orb}(B_\bfa)=\frac{1}{30}\gra$ where $\gra$ is primitive in $H^2(B_\bfa,\bbz)$. So if $p:\mathsf{B}B_\bfa\ra{1.6} B_\bfa$ is the orbifold classifying map, $p^*c_1^{orb}$ is primitive in $H^2_{orb}(B_\bfa,\bbz)$. Moreover, $c_1^{orb}$ pulls back to an integer cohomology class on $M^3_\bfa$ which of course in our case is $0$.

We want to emphasize, as mentioned previously, that the orbifold K\"ahler-Einstein metrics on $B_\bfa$ correspond to Sasaki-$\eta$-Einstein metrics on $M^3_\bfa$. In the positive case when $M^3_\bfa=S^3/\bbi^*$ and $\bfa=(2,3,5)$, the positive K\"ahler-Einstein metric on $B_{(2,3,5)}$ with scalar curvature $8$ corresponds to the standard constant sectional curvature (and $\Phi$ sectional curvature) $1$ metric on $S^3/\bbi^*$. In the negative case the K\"ahler-Einstein metric on $B_\bfa$ with scalar curvature $-2$ corresponds to constant $\Phi$ sectional curvature $-4$ on $M^3_\bfa$. In both cases by rescaling the K\"ahler-Einstein metric on $B_a$, we obtain a ray of Sasaki-$\eta$-Einstein metrics on $M^3_\bfa$ with constant $\Phi$ sectional curvature. 

In order to construct $\eta$-Einstein metrics on joins, we need to consider an {\it index}. In \cite{BG00a} we worked with positive Sasakian structure in which case we defined the Fano index. For complete intersections it is $|\bfw|-|\bfd|$. However, here in all but one case, the Sasakian structure is negative, so we define the {\it canonical index} as $I_\bfa=|\bfd|-|\bfw|$. For homology spheres $M^3_\bfa$, $|\bfd|=(n-1)a_0\cdots a_n$ and $|\bfw|=\sum_ja_0\cdots \hat{a_j}\cdots a_n$.

\section{The Join of an Homology 3-Sphere}

Here we let $M^3_\bfa$ be a homology sphere described by the links $L(a_0,\ldots,a_n)$ of Section \ref{secbrhomsph} and we assume that $a_i\geq 2$ for all $i$, so that it is not the standard sphere. As seen in the last section $M^3_\bfa$ has a natural Sasakian structure of constant $\Phi$ sectional curvature, and we can easily obtain higher dimensional Sasakian manifolds with perfect fundamental group by applying the join construction. 

\subsection{The Topology of $M^3_\bfa\star_{k,l}N$ with $N$ Simply Connected}
We begin with
\begin{theorem}\label{1conperfect}
Let $N$ be a simply connected $2r+1$-dimensional quasi-regular Sasakian manifold of order $\upsilon$, and let $M^3_\bfa$ be an homology sphere described in Section \ref{secbrhomsph}, so $\gcd\{a_i\}=1$. If also $\gcd(la_0\cdots a_n,k\upsilon)=1$ then the $(k,l)$-join $M^3_\bfa\star_{k,l}N$ is a $2r+3$-dimensional Sasakian manifold. Moreover, if $M^3_\bfa$ is not the Poincar\'e sphere we have
\begin{itemize}
\item $M^3_\bfa\star_{k,l}N$ has a perfect fundamental group isomorphic to a quotient of $\pi_1(M^3_\bfa)$.
\item $\pi_1(M^3_\bfa\star_{k,l}N)$ is a $\bbz_l$ extension of the perfect group $\pi_1^{orb}(B_\bfa)\approx \pi_1(M^3_\bfa)/\bbz$.
\item $\pi_i(M^3_\bfa\star_{k,l}N)\approx \pi_i(N)~~\text{for $i\geq 2$}$.
\end{itemize}

If $M^3_\bfa$ is the Poincar\'e sphere $S^3/\bbi^*$, then 
\begin{itemize}
\item If $l$ is odd $M^3_\bfa\star_{k,l}N$ has a perfect fundamental group equal to $\bbi$.
\item If $l$ is even $M^3_\bfa\star_{k,l}N$ has a perfect fundamental group equal to either $\bbi$ or $\bbi^*$.
\item $\pi_i(M^3_\bfa\star_{k,l}N)\approx \pi_i(S^3)\oplus \pi_i(N) ~~\text{for all $i\geq 3$}.$
\end{itemize}
\end{theorem}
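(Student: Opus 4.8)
The plan is to compute the homotopy groups of the join $M^3_\bfa\star_{k,l}N$ by exploiting the two commuting circle fibrations in diagram (\ref{comdia1}), together with the known homotopy type of the homology 3-sphere established in Proposition \ref{M3asph} and Lemma \ref{piorb}. The key structural fact I would use is that the join is the total space of a circle orbibundle over $\calz_1\times\calz_2=B_\bfa\times\calz_N$, where $B_\bfa$ is the $S^2$-orbifold base of the Seifert fibration $S^1\to M^3_\bfa\to B_\bfa$ and $\calz_N$ is the K\"ahler orbifold quotient of $N$. So the first step is to set up the orbifold homotopy long exact sequence for the Boothby-Wang circle orbibundle over the product, and to relate $\pi_*^{orb}(B_\bfa\times\calz_N)$ to the factors. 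Because both the Seifert fibration of $M^3_\bfa$ and the circle bundle of $N$ are already understood, I expect $\pi_i^{orb}(\calz_N)\approx\pi_i(N)$ for $i\geq 2$ (with $\pi_1^{orb}$ controlled by the order $\upsilon$), and $\pi_i^{orb}(B_\bfa)$ given explicitly by Lemma \ref{piorb}.

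First I would treat the non-Poincar\'e case, where Lemma \ref{piorb} tells us $B_\bfa$ is \emph{aspherical} in the orbifold sense: $\pi_i^{orb}(B_\bfa)=0$ for $i\geq 2$ and $\pi_1^{orb}(B_\bfa)\approx\pi_1(M^3_\bfa)/\bbz$ is perfect. Then in the product $B_\bfa\times\calz_N$ the higher homotopy comes entirely from $\calz_N$, so the circle orbibundle long exact sequence gives $\pi_i(M^3_\bfa\star_{k,l}N)\approx\pi_i(\calz_N)\approx\pi_i(N)$ for $i\geq 2$, using that $N$ is simply connected so its own orbibundle sequence identifies $\pi_i(\calz_N)$ with $\pi_i(N)$ in that range. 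For the fundamental group I would track the contribution of the fiber circle: the total space sits over $B_\bfa\times\calz_N$, and since $N$ is simply connected the only $\pi_1$ contributions are the perfect group $\pi_1^{orb}(B_\bfa)$ and the finite cyclic piece measuring how the Euler class $[\gro_{k,l}]=k\gro_1+l\gro_2$ meets the fiber. The coprimality hypothesis $\gcd(la_0\cdots a_n,k\upsilon)=1$ is exactly what guarantees smoothness and pins down that the $\bbz_l$-extension is the one appearing in the statement; the quotient of a perfect group by a central-type extension that is itself perfect remains perfect, so I would verify perfectness survives by abelianizing the extension $1\to\bbz_l\to\pi_1\to\pi_1^{orb}(B_\bfa)\to 1$ and checking the abelianization vanishes.

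For the Poincar\'e sphere case $M^3_\bfa=S^3/\bbi^*$, the situation changes because $B_{(2,3,5)}$ is \emph{not} aspherical: Lemma \ref{piorb} gives $\pi_2^{orb}(B)\approx\bbz$ with $\gri$ multiplication by $2$, $\pi_i^{orb}(B)\approx\pi_i(S^3)$ for $i\geq 3$, and $\pi_1^{orb}(B)\approx\bbi$. Here the extra $\bbz=\pi_2^{orb}(B)$ feeds the higher homotopy, so combining with the $S^3$-type homotopy of the Poincar\'e sphere and the simply connected $N$, I would obtain $\pi_i(M^3_\bfa\star_{k,l}N)\approx\pi_i(S^3)\oplus\pi_i(N)$ for $i\geq 3$ via the product orbibundle sequence, the $\pi_i(S^3)$ summand coming from $\pi_i^{orb}(B)$ and the $\pi_i(N)$ summand from $\calz_N$. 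The fundamental group analysis then hinges on whether the fiber circle's period is even or odd: the factor of $2$ in $\gri$ means that when $l$ is odd the $\bbz_l$-extension forces the quotient $\bbi^*/\{\pm1\}\approx\bbi$, while when $l$ is even one cannot rule out that the central $\bbz_2$ survives, leaving either $\bbi$ or $\bbi^*$.

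I expect the main obstacle to be the careful bookkeeping in the fundamental group extension, specifically disentangling the two circle actions in diagram (\ref{comdia1}). The join's Reeb circle and the transverse circle $\frac{1}{2k}\xi_1-\frac{1}{2l}\xi_2$ interact with the orbifold fundamental groups of both factors, and isolating exactly which cyclic subgroup of $\pi_1(M^3_\bfa)$ is killed—and verifying the residual group is the claimed $\bbz_l$-extension rather than some larger or smaller quotient—requires tracking the image of $\psi$ in the sequence (\ref{3sphexhom}) under the join. The even/odd dichotomy for the Poincar\'e case, traced back to $\gri$ being multiplication by $2$, is the subtle point: establishing that odd $l$ definitively collapses $\bbi^*$ to $\bbi$ while even $l$ leaves genuine ambiguity is where I would spend the most care, likely by examining the action of the covering transformations of $S^3\to S^3/\bbi^*$ on the fiber of the join.
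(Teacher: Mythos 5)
Your framework---running the orbifold homotopy exact sequence of the Boothby--Wang circle orbibundle of the join over $B_\bfa\times\calz_N$ and feeding in Lemma \ref{piorb}---does recover the higher homotopy statements and the extension $1\to\bbz_l\to\pi_1(M^3_\bfa\star_{k,l}N)\to\pi_1^{orb}(B_\bfa)\to1$. But there is a genuine gap at the central claim of the theorem: \emph{perfectness} of $\pi_1$. You propose to verify it ``by abelianizing the extension $1\to\bbz_l\to\pi_1\to\pi_1^{orb}(B_\bfa)\to 1$ and checking the abelianization vanishes.'' That check cannot succeed from the extension structure alone: a central extension of a perfect group by $\bbz_l$ need not be perfect, the direct product $\bbz_l\times\bbi$ being a counterexample (its abelianization is $\bbz_l$). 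Indeed, by the five-term exact sequence the abelianization of such an extension is $\bbz_l$ modulo the image of $H_2(\pi_1^{orb}(B_\bfa))$ under the map determined by the extension class, so your check requires identifying the extension class itself, which your outline never does. The same defect infects your Poincar\'e case: your sequence shows only that for $l$ even $\pi_1$ is a central extension of $\bbi$ by $\bbz_2$, and this allows $\bbz_2\times\bbi$, which is not perfect; so you cannot conclude ``either $\bbi$ or $\bbi^*$'' from that fibration alone.

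The paper closes exactly this gap with a second fibration which your proposal mentions (via diagram (\ref{comdia1})) but never actually uses: the principal circle bundle (\ref{joinbundle}), $S^1\to M^3_\bfa\times N\to M^3_\bfa\star_{k,l}N$, coming from the quotient by the ``anti-diagonal'' circle, which is free precisely under the hypothesis $\gcd(la_0\cdots a_n,k\upsilon)=1$. Its exact sequence (\ref{longexact}) exhibits $\pi_1(M^3_\bfa\star_{k,l}N)$ as a quotient of $\pi_1(M^3_\bfa\times N)=\pi_1(M^3_\bfa)$, and a quotient of a perfect group is perfect; this settles perfectness in all cases at once. In the Poincar\'e case it further shows $\pi_1$ is the quotient of $\bbi^*$ by a cyclic (in fact central) subgroup, hence $\bbi$ or $\bbi^*$, ruling out $\bbz_2\times\bbi$; then the sequence (\ref{homexactseq2}) over $B_\bfa$---the analogue of what you set up---bounds the order by $60l$ and eliminates $\bbi^*$ when $l$ is odd, which is the rigorous version of your even/odd dichotomy. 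The same product fibration also yields the $i\geq3$ statement immediately as $\pi_i(S^3)\oplus\pi_i(N)$. In short, your single-fibration route establishes the extension structure but structurally cannot establish perfectness; the missing idea is the complementary exact sequence coming from $M^3_\bfa\times N$ viewed as a circle bundle over the join.
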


\begin{proof}
The fact that $M^3_\bfa\star_{k,l}N$ is a smooth Sasakian manifold of dimension $2r+3$ follows from Proposition 7.6.6 of \cite{BG05}. There are two relevant fibrations involving the join. By Proposition 7.6.7 of \cite{BG05}  $M^3_\bfa\star_{k,l}N$ can be realized as a $N/\bbz_l$-bundle over the base orbifold $B_\bfa$ of $M^3_\bfa$, that is we have an orbifold fibration
\begin{equation}\label{prop767}
N/\bbz_l\ra{1.8} M^3_\bfa\star_{k,l}N\ra{1.8} B_\bfa
\end{equation}
whose long exact homotopy sequence is
\begin{equation}\label{homexactseq}
0\ra{1.5}\pi_2(N)\ra{1.5} \pi_2(M^3_\bfa\star_{k,l}N)\ra{1.5}\pi_2^{orb}(B_\bfa) \ra{1.5}\bbz_l\ra{1.5} \pi_1(M^3_\bfa\star_{k,l}N) \ra{1.5} \pi_1^{orb}(B_\bfa)\ra{1.5} 1.
\end{equation}
On the other hand we have the circle bundle that defines the join, namely
\begin{equation}\label{joinbundle}
S^1\ra{1.5} M^3_\bfa\times N\ra{1.6} M^3_\bfa\star_{k,l}N,
\end{equation} 
which gives the long exact sequence
\begin{equation}\label{longexact}
0\ra{1.6}\pi_2(M^3_\bfa\times N)\ra{1.6}\pi_2(M^3_\bfa\star_{k,l}N)\ra{1.6}\bbz\ra{1.6} \pi_1(M^3_\bfa)\ra{1.6} \pi_1(M^3_\bfa\star_{k,l}N)\ra{1.6} 1.
\end{equation}
Since the quotient of a perfect group is perfect, this implies that in all cases the fundamental group of $M^3_\bfa\star_{k,l}N$ is perfect.

Now if $M^3_\bfa$ is not the Poincar\'e sphere Lemma \ref{piorb} says that for $i\geq 2$, $\pi_i^{orb}(B_\bfa)=0$ which implies that $\pi_1(M^3_\bfa\star_{k,l}N)$ is a $\bbz_l$ extension of $\pi_1^{orb}(B_\bfa)$. The second statement then follows immediately. The third statement also follows from the exact homotopy sequence and the fact that $\pi_i^{orb}(B_\bfa)=0$ for $i\geq 2$.

When $M^3_\bfa$ is the Poincar\'e sphere $S^3/\bbi^*$, the long exact homotopy sequence (\ref{homexactseq}) becomes, again using Lemma \ref{piorb},
\begin{equation}\label{homexactseq2}
\ra{1.8}\pi_2(N)\ra{1.8} \pi_2(M^3_\bfa\star_{k,l}N)\ra{1.8} \bbz\ra{1.8}\bbz_l\ra{1.8} \pi_1(M^3_\bfa\star_{k,l}N) \ra{1.8} \bbi\ra{1.8} 1.
\end{equation}
But also in this case the long exact homotopy sequence (\ref{longexact}) gives
\begin{equation}\label{homexactseq3}
	0\ra{1.8}\pi_2(N)\ra{1.8} \pi_2(M^3_\bfa\star_{k,l}N)\ra{1.8} \bbz\ra{1.8}\bbi^*\ra{1.8} \pi_1(M^3_\bfa\star_{k,l}N) \ra{1.8} 1.
\end{equation}
From the exact sequence (\ref{homexactseq3}), $\pi_1(M^3_\bfa\star_{k,l}N)$ is either $\bbi$ or $\bbi^*$. But from (\ref{homexactseq2}) if $l$ is odd, then $\pi_1(M^3_\bfa\star_{k,l}N)$ cannot be $\bbi^*$ which proves the first statement of this case.  However, if $l$ is even, $\pi_1(M^3_\bfa\star_{k,l}N)$ can be either $\bbi$ or $\bbi^*$. The final statement follows from the homotopy exact sequence of (\ref{joinbundle}).
\end{proof}

Also when $l$ is odd the map in (\ref{homexactseq3})
$$\pi_2(M^3_\bfa\star_{k,l}N)/\pi_2(N)\approx \bbz\ra{1.8} \bbz$$ 
is multiplication by $2$; whereas, the similar map in (\ref{homexactseq2}) is multiplication by $l$.

\begin{remark} 
Notice that for fixed $\bfa$ and $\upsilon$, there are infinitely many $(k,l)$ that satisfy the smoothness condition $\gcd(la_0\cdots a_n,k\upsilon)=1$.
\end{remark}

For cohomology we have
\begin{lemma}\label{Hjoin}
Assume the hypothesis of Theorem \ref{1conperfect}. 
Then 
$H^2(M^3_\bfa\star_{k,l}N,\bbz)\approx H^2(N,\bbz)\oplus \bbz.$
\end{lemma}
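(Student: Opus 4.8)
The plan is to run the Gysin sequence of the principal circle bundle (\ref{joinbundle}), $S^1\to M^3_\bfa\times N\to M^3_\bfa\star_{k,l}N$, whose Euler class $e\in H^2(M^3_\bfa\star_{k,l}N,\bbz)$ is the transverse K\"ahler class of the join. First I would compute the cohomology of the total space $M^3_\bfa\times N$ by the K\"unneth theorem. Since $M^3_\bfa$ is an integral homology $3$-sphere, its cohomology coincides with that of $S^3$; in particular it is free and vanishes in degrees $1$ and $2$, so there are no $\mathrm{Tor}$ contributions in the relevant range and one obtains $H^0(M^3_\bfa\times N,\bbz)=\bbz$, $H^1(M^3_\bfa\times N,\bbz)=0$, and $H^2(M^3_\bfa\times N,\bbz)\cong H^2(N,\bbz)$, the last isomorphism being the K\"unneth projection $p_2^*$ induced by the projection $p_2:M^3_\bfa\times N\to N$.

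Feeding this into the Gysin sequence, the vanishing of $H^1(M^3_\bfa\times N,\bbz)$ forces $H^1(M^3_\bfa\star_{k,l}N,\bbz)=0$ and leaves the five-term portion
\begin{equation*}
0\longrightarrow\bbz\stackrel{\cup e}{\longrightarrow}H^2(M^3_\bfa\star_{k,l}N,\bbz)\stackrel{\pi^*}{\longrightarrow}H^2(N,\bbz)\longrightarrow 0,
\end{equation*}
where injectivity of $\cup e$ (equivalently, that $e$ has infinite order) again comes from $H^1(M^3_\bfa\times N,\bbz)=0$. Tensoring with $\bbq$ gives $\dim_\bbq H^2(M^3_\bfa\star_{k,l}N,\bbq)=b_2(N)+1$, so the free rank is already as claimed; the remaining content of the lemma is that this extension splits.

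The splitting is the step I expect to be the main obstacle, and it is not formal: since $H^2(N,\bbz)$ may carry torsion, an extension of it by $\bbz$ need not split (already $0\to\bbz\stackrel{n}{\longrightarrow}\bbz\to\bbz/n\to 0$ does not). Because $\ker\pi^*=\bbz e$ is torsion-free, $\pi^*$ embeds $\mathrm{Tors}\,H^2(M^3_\bfa\star_{k,l}N,\bbz)$ into $\mathrm{Tors}\,H^2(N,\bbz)$; as the ranks already agree, the lemma reduces to showing this embedding is surjective, i.e. that every torsion class of $H^2(N,\bbz)$ admits a torsion preimage upstairs.

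To manufacture such lifts I would use the other description of the join, as the circle orbibundle over $B_\bfa\times\calz_N$ given by the southwest arrow of (\ref{comdia1}), followed by projection to $\calz_N$; this produces an orbifold map $\Pi:M^3_\bfa\star_{k,l}N\to\calz_N$ which fits with $p_2$ and the Boothby--Wang projection $q_N:N\to\calz_N$ into a square satisfying $q_N\circ p_2=\Pi\circ\pi$, whence $\pi^*\circ\Pi^*=p_2^*\circ q_N^*$ on $H^2_{orb}(\calz_N,\bbz)$. Under the K\"unneth identification this reads $\pi^*\Pi^*=q_N^*$. Now the Gysin sequence of $q_N$ (using $H^1_{orb}(\calz_N,\bbz)=0$, which holds because $N$ is simply connected) identifies $H^2(N,\bbz)$ with $H^2_{orb}(\calz_N,\bbz)/\bbz[\gro_N]$, and simple-connectivity of $N$ also forces the Euler class $[\gro_N]$ to be primitive modulo torsion. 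A short diagram chase then shows each torsion class of $H^2(N,\bbz)$ has a torsion preimage in $H^2_{orb}(\calz_N,\bbz)$, whose image under $\Pi^*$ is the desired torsion lift. This yields $\mathrm{Tors}\,H^2(M^3_\bfa\star_{k,l}N,\bbz)\cong\mathrm{Tors}\,H^2(N,\bbz)$ and hence the splitting. The delicate points to verify will be the commutativity of the orbifold square at the integral level---best handled through the Haefliger classifying spaces of Section 3---and the primitivity of $[\gro_N]$, which follows from the orbifold homotopy sequence of $q_N$ together with $\pi_1(N)=0$.
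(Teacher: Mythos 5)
Your Gysin-sequence setup is exactly the paper's: both arguments run the sequence of the circle bundle (\ref{joinbundle}) to obtain the short exact sequence $0\to\bbz\to H^2(M^3_\bfa\star_{k,l}N,\bbz)\to H^2(N,\bbz)\to 0$, and this part of your proposal is correct (the paper gets $H^1(M^3_\bfa\star_{k,l}N,\bbz)=0$ from the perfect fundamental group of Theorem \ref{1conperfect}, you get it from the Gysin sequence itself; either is fine). The gap is in what you do next. You assert that the splitting ``is not formal'' because ``$H^2(N,\bbz)$ may carry torsion,'' and you then outline --- but do not complete --- an orbifold-cohomology lifting argument, explicitly deferring ``delicate points to verify.'' But the hypothesis of Theorem \ref{1conperfect} includes that $N$ is \emph{simply connected}, so by the universal coefficient theorem $H^2(N,\bbz)\cong \mathrm{Hom}(H_2(N,\bbz),\bbz)\oplus \mathrm{Ext}^1(H_1(N,\bbz),\bbz)=\mathrm{Hom}(H_2(N,\bbz),\bbz)$, a finitely generated free abelian group. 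Hence $H^2(N,\bbz)$ has no torsion at all: the concern around which you built the second half of your proof is vacuous, and the extension splits for the standard formal reason that any extension with free quotient splits. This is precisely the paper's one-line conclusion, ``which splits since $H^2(N,\bbz)$ is free.''

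It is worth noting that you did invoke the simple connectivity of $N$ elsewhere (to get $H^1_{orb}(\calz_N,\bbz)=0$ and primitivity of the Euler class), so what you missed is not the hypothesis but its most immediate consequence. And even granting your (valid) reduction to surjectivity of $\pi^*$ on torsion subgroups, the proposed lift through $\Pi^*$ is only a sketch whose key steps --- integral commutativity of the orbifold square and the final diagram chase --- are left unverified, so as written the proof is incomplete. Once freeness of $H^2(N,\bbz)$ is invoked, all of that machinery can simply be deleted.
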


\begin{proof}
Now $M^3_\bfa$ is a homology sphere, and by Theorem \ref{1conperfect} $M^3_\bfa\star_{k,l}N$ has perfect fundamental group, so the Gysin sequence of the fibration (\ref{joinbundle}) gives the short exact sequence
$$0\ra{1.8}\bbz \ra{1.8} H^{2}(M^3_\bfa\star_{k,l} N,\bbz)\ra{1.8} H^2(N,\bbz) \ra{1.8}0$$
which splits since $H^2(N,\bbz)$ is free.
\end{proof}

A special case of interest is $N=S^{2r+1}$, the $(2r+1)$-sphere. 
\begin{proposition}\label{homnsphe}
Let $M^3_\bfa$ be an homology 3-sphere. Then the join $M^3_\bfa\star_{k,l}S^{2r+1}$ has the rational cohomology ring of $S^2\times S^{2r+1}$ for all relatively prime positive integers $k,l$, and the join $M^3_\bfa\star_{k,1}S^{2r+1}$ has the integral cohomology ring of $S^2\times S^{2r+1}$ for all positive integers $k$. If $r=1$, the join $M^3_\bfa\star_{k,l}S^3$ has the integral cohomology ring of $S^2\times S^3$ for all relatively prime positive integers $k,l$.
\end{proposition}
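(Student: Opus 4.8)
The plan is to read off all the cohomology from the Gysin sequence of the principal circle bundle (\ref{joinbundle}), namely $S^1\ra{1.2}M^3_\bfa\times S^{2r+1}\ra{1.2}W$ with $W:=M^3_\bfa\star_{k,l}S^{2r+1}$, and to get the ring structure from Poincar\'e duality on the closed oriented $(2r+3)$-manifold $W$. The point is that $E:=M^3_\bfa\times S^{2r+1}$ is a genuine product, so by K\"unneth and the fact that $M^3_\bfa$ is a homology $3$-sphere, $H^*(E;\bbz)$ is $\bbz$ in degrees $0,3,2r+1,2r+4$ and $0$ otherwise when $r\geq 2$ (for $r=1$ the two middle groups coalesce into $H^3(E)=\bbz^2$). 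In particular $H^2(E)=0$, so the Euler class $e$ generates $H^2(W)=\bbz$, which recovers Lemma \ref{Hjoin}. The ring structure will then follow uniformly once the groups are known to be those of $S^2\times S^{2r+1}$: writing $x$ for the generator of $H^2(W)$ and $y$ for that of $H^{2r+1}(W)$, one has $x^2\in H^4(W)=0$ and $y^2\in H^{4r+2}(W)=0$ for dimensional reasons, while $xy$ generates $H^{2r+3}(W)$ because the cup pairing $H^2(W)\times H^{2r+1}(W)\ra{1.2}H^{2r+3}(W)$ is the (unimodular) Poincar\'e duality pairing.

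For the rational statement I would instead use the circle orbibundle $\rho\colon W\ra{1.2}B_\bfa\times\bbc\bbp^r$ with Euler class $k\epsilon_1+l\epsilon_2$, where $\epsilon_1\in H^2_{orb}(B_\bfa)$ and $\epsilon_2\in H^2(\bbc\bbp^r)$ are the Euler classes of the defining Seifert and Hopf fibrations. Since $\pi_1^{orb}(B_\bfa)$ is perfect and, in all cases, $B_\bfa$ is a rational cohomology $2$-sphere, one has $H^*_{orb}(B_\bfa\times\bbc\bbp^r;\bbq)\cong\bbq[x,h]/(x^2,h^{r+1})$, and the rational Gysin sequence reduces to computing the kernel and cokernel of multiplication by $kx+lh$ on this ring. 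This is an elementary computation producing exactly one class in each of the degrees $0,2,2r+1,2r+3$, i.e. the Betti numbers of $S^2\times S^{2r+1}$; the ring then follows from the first paragraph. This settles the rational statement for all coprime $k,l$.

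For the integral statement when $r=1$, the manifold $W$ is a closed oriented $5$-manifold whose perfect fundamental group forces $H_1(W)=0$, so that $H^1(W)=0$ and, by Poincar\'e duality, $H^4(W)\cong H_1(W)=0$. The relevant stretch of the integral Gysin sequence is then
\[
0\ra{1.2}H^3(W)\ra{1.2}H^3(E)=\bbz^2\fract{\pi_*}{\ra{1.2}}H^2(W)=\bbz\fract{\cup e}{\ra{1.2}}H^4(W)=0,
\]
so $\pi_*$ is onto and its kernel $H^3(W)$ is free of rank one. Hence $H^*(W;\bbz)$ is torsion free with the groups of $S^2\times S^3$, and the ring follows as above; this is the third assertion.

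The remaining case, the integral statement for $l=1$ and general $r$, is where the real work lies. The integral Gysin sequence shows that everything is governed by the single map $\pi_*\colon H^3(E)\cong\bbz\ra{1.2}H^2(W)\cong\bbz$: if $m$ is its degree then $H^4(W)\cong\bbz_m$, and this torsion is propagated up the even degrees by the isomorphisms $\cup e$ until it meets the top classes, so that $W$ has the integral cohomology of $S^2\times S^{2r+1}$ exactly when $m=1$. The main obstacle is therefore to evaluate $\pi_*$, equivalently the cup-square $e^2\in H^4(W)$. I would compute $\pi_*$ by integrating the pulled-back fundamental generator of $H^3(M^3_\bfa)$ over the fibre of (\ref{joinbundle}) --- the circle generated by $\tfrac{1}{2k}\xi_1-\tfrac{1}{2l}\xi_2$ --- and rewriting the result through $\epsilon_1$ and the relations $\rho^*\epsilon_1=l\,e$, $\rho^*\epsilon_2=-k\,e$ in $H^2(W)$, which come from the Gysin sequence of $\rho$ and $\gcd(k,l)=1$. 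The delicate part is that this forces one to control the integral orbifold cohomology $H^*_{orb}(B_\bfa)$ and, in particular, to disentangle the contribution of $l$ from that of the orbifold order $d=a_0\cdots a_n$; proving that the homology-sphere hypothesis makes $\pi_*$ surjective precisely when $l=1$ is the crux of the argument and the step I expect to cost the most effort.
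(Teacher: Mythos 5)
Your handling of the $r=1$ case and of the rational statement is correct, and it is genuinely different from the paper's argument. The paper proves the whole proposition with the Leray--Serre spectral sequence of the fibration $M^3_\bfa\times S^{2r+1}\ra{1.5}M^3_\bfa\star_{k,l}S^{2r+1}\ra{1.5}\mathsf{B}S^1$, compared by naturality with the product fibration over $\mathsf{B}S^1\times \mathsf{B}S^1$ in diagram (\ref{orbifibrationexactseq}); in particular the $r=1$ case is settled there by exhibiting the surviving primitive class $k^2\gra-l^2\grb\in E_2^{0,3}$. Your $r=1$ argument instead uses only the Gysin sequence of (\ref{joinbundle}), the vanishing $H_1(W)=0$ coming from perfectness of $\pi_1$ (Theorem \ref{1conperfect}), and Poincar\'e duality; this is cleaner, since it needs no knowledge of any differential, and (see below) it is also more robust than the paper's computation.

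The genuine gap is exactly the step you flagged and deferred: evaluating $m=\deg\pi_*$ for $M^3_\bfa\star_{k,1}S^{2r+1}$, $r\geq 2$. This is not merely the laborious step; it is where the argument cannot be closed. Your reduction is right: $H^4(W)\cong \bbz/m$. But $|m|$ equals the order of the transgression of $\gra$ in the universal fibration $M^3_\bfa\ra{1.4}\mathsf{B}B_\bfa\ra{1.4}\mathsf{B}S^1$ scaled by $l^2$, i.e.\ $|m|=c_\bfa l^2$ where $\bbz/c_\bfa\cong H^4_{orb}(B_\bfa,\bbz)$, and this orbifold cohomology group is \emph{not} zero: a Mayer--Vietoris decomposition of Haefliger's classifying space over the cone points (equivalently, $\mathsf{B}B_\bfa\simeq B\grG$ for the Fuchsian group $\grG$ in the aspherical case, whose degree-$4$ cohomology is the sum of that of its maximal finite cyclic subgroups) gives $H^4_{orb}(B_\bfa,\bbz)\cong\bigoplus_i\bbz/a_i\cong \bbz/d_\bfa$. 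Hence $d_4(\gra)=\pm d_\bfa(ls)^2$, so $|m|=d_\bfa l^2$ and $H^4(M^3_\bfa\star_{k,1}S^{2r+1},\bbz)\cong\bbz/d_\bfa\neq 0$: the orbifold order you worried about does not disentangle. As a hands-on check of the mechanism, take $S^3$ with the weight-$(p,q)$ Sasakian structure (base a football orbifold): its $(1,1)$-join with $S^{2r+1}$ is the unit sphere bundle of $\calo(p)\oplus\calo(q)\ra{1.4}\bbc\bbp^r$, whose $H^4$ is $\bbz/pq$. You should also be aware that the paper's own proof passes this point by asserting $H^*_{orb}(B_\bfa,\bbz)=H^*(S^2,\bbz)$, whence $d_4(\gra)=(ls)^2$; that assertion fails precisely in degree $4$, which is the degree at issue, so the paper does not supply the missing argument either. (Rationally the torsion dies, so your second paragraph stands; and for $r=1$ only $\gcd(d_\bfa l^2,k^2)=1$ matters, which smoothness guarantees --- your Poincar\'e duality argument sidesteps the whole issue there.)
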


\begin{proof}
Our proof uses the spectral sequence method employed in \cite{WaZi90,BG00a} (see also Section 7.6.2 of \cite{BG05}). The fibration (\ref{joinbundle}) together with the torus bundle with total space $M^3_\bfa\times S^{2r+1}$ gives the commutative diagram of fibrations
\begin{equation}\label{orbifibrationexactseq}
\begin{matrix}M^3_\bfa\times S^{2r+1}&\ra{2.6} &M^3_\bfa\star_{k,1}S^{2r+1}&\ra{2.6}
&\mathsf{B}S^1 \\
\decdnar{=}&&\decdnar{}&&\decdnar{\psi}\\
M^3_\bfa\times S^{2r+1}&\ra{2.6} &\mathsf{B}B_\bfa\times \bbc\bbp^r&\ra{2.6}
&\mathsf{B}S^1\times \mathsf{B}S^1\, 
\end{matrix} \qquad \qquad
\end{equation}
where $\mathsf{B}G$ is the classifying space of a group $G$ or Haefliger's classifying space \cite{Hae84} of an orbifold if $G$ is an orbifold. Note that by definition $H^i_{orb}(B_\bfa,\bbz)=H^i(\mathsf{B}B_\bfa,\bbz)$ and an easy argument of the orbifold fibration $S^1\ra{1.3}M^3_\bfa\ra{1.3}B_\bfa$ shows that $H^*_{orb}(B_\bfa,\bbz)=H^*(S^2,\bbz)$. So cohomologically, $\mathsf{B}B_\bfa$ is $S^2$. We also note that $\mathsf{B}S^1=\bbc\bbp^\infty$ with cohomology ring $\bbz[s]$ with $s\in H^2(\bbc\bbp^\infty,\bbz)$.

Now the map $\psi$ is that induced by the inclusion $e^{i\theta}\mapsto (e^{il\theta},e^{-ik\theta})$. So writing $$H^*(\mathsf{B}S^1\times \mathsf{B}S^1,\bbz)=\bbz[s_1,s_2]$$ 
we see that $\psi^*s_1=ls$ and $\psi^*s_2=-ks$. The $E_2$ term of the Leray-Serre spectral sequence of the top fibration of diagram (\ref{orbifibrationexactseq}) is $E^{p,q}_2=H^p(\mathsf{B}S^1,H^q(M^3_\bfa\times S^{2r+1},\bbz))$. The non-zero terms occur when $p$ is even, say $2p'$, and $q=0,3,2r+1,2r+4$. Let $\gra,\grb$ denote the orientation class of $M^3_\bfa,S^{2r+1}$, respectively. The cohomology ring of the fiber is $\grL[\gra,\grb]$, whereas,  that of the base is $\bbz[s]$. Then if $r>1$ the differential $d_4(\gra)=(ls)^2$, so we need $l=1$ to avoid torsion in $H^4\approx$ torsion in $H_3$. Then by naturality we have $d_4(\gra\otimes s^{2p'})=s^{2p'+2}$, but then $d_{2r+2}(\grb)=0$ and $d_{2r+2}(\gra\cup\grb)=s^2\otimes\grb$, so the classes that survive are $s,\grb,\grb\cup s$ which proves the result for $r>1$. 

When $r=1$ we have $d_4(\gra)=l^2s^2$ and $d_4(\grb)=k^2s^2$ which implies that the primative integral class $k^2\gra-l^2\grb\in E_2^{0,3}$ survives, and this proves the result together with Poincar\'e duality.
\end{proof}

\subsection{The Topology of $M^3_\bfa\star_{k,l}M^3_\bfb$}
Here we consider the join of two homology 3-spheres. We assume that $\gcd(\bfa,\bfb)=1$ which implies that at most one of them can be the Poincar\'e sphere. From the fibration (\ref{joinbundle}) with $N$ replaced by $M^3_\bfb$ we obtain
$$0\ra{1.6}\pi_2(M^3_\bfa\times M^3_\bfb)\ra{1.6}\pi_2(M^3_\bfa\star_{k,l}M^3_\bfb)\ra{1.6}\bbz\ra{1.6} \pi_1(M^3_\bfa\times M^3_\bfb)\ra{1.6} \pi_1(M^3_\bfa\star_{k,l}M^3_\bfb)\ra{1.6} 1.$$
Now $M^3_\bfa\times M^3_\bfb$ has perfect fundamental group $\pi_1(M^3_\bfa)\times \pi_1(M^3_\bfb)$, and if neither is the Poincar\'e sphere $M^3_\bfa\times M^3_\bfb$ is aspherical.  Hence, $\pi_1(M^3_\bfa\star_{k,l}M^3_\bfb)$ is perfect and $\pi_i(M^3_\bfa\star_{k,l}M^3_\bfb)=0$ for $i\geq 3$. Moreover, we have
\begin{equation}\label{longexact2}
0\ra{1.6}\pi_2(M^3_\bfa\star_{k,l}M^3_\bfb)\ra{1.6}\bbz\ra{1.6} \pi_1(M^3_\bfa)\times \pi_1(M^3_\bfb)\ra{1.6} \pi_1(M^3_\bfa\star_{k,l}M^3_\bfb)\ra{1.6} 1
\end{equation}

\begin{proposition}\label{cohomjoin2}
For all relatively prime positive integers $k,l$ the 5-manifold $M^3_\bfa\star_{k,l}M^3_\bfb$ has the integral cohomology ring of $S^2\times S^3$.
\end{proposition}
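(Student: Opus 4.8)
The plan is to transcribe the spectral-sequence argument of Proposition \ref{homnsphe}, replacing the simply connected factor $S^{2r+1}$ by the second homology sphere $M^3_\bfb$. First I would set up the commutative diagram of fibrations
\[
\begin{matrix}
M^3_\bfa\times M^3_\bfb&\ra{2.6}&M^3_\bfa\star_{k,l}M^3_\bfb&\ra{2.6}&\mathsf{B}S^1\\
\decdnar{=}&&\decdnar{}&&\decdnar{\psi}\\
M^3_\bfa\times M^3_\bfb&\ra{2.6}&\mathsf{B}B_\bfa\times\mathsf{B}B_\bfb&\ra{2.6}&\mathsf{B}S^1\times\mathsf{B}S^1
\end{matrix}
\]
whose top row is the fibration classifying the circle bundle (\ref{joinbundle}) and whose bottom row is the product of the two orbifold fibrations $M^3_\bfc\ra{1.3}\mathsf{B}B_\bfc\ra{1.3}\mathsf{B}S^1$ for $\bfc=\bfa,\bfb$. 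Exactly as before, $\psi$ is induced by $e^{i\theta}\mapsto(e^{il\theta},e^{-ik\theta})$, so with $H^*(\mathsf{B}S^1\times\mathsf{B}S^1,\bbz)=\bbz[s_1,s_2]$ and $H^*(\mathsf{B}S^1,\bbz)=\bbz[s]$ we have $\psi^*s_1=ls$ and $\psi^*s_2=-ks$.

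Next I would record the fiber data. Since $M^3_\bfa$ and $M^3_\bfb$ are both homology $3$-spheres, K\"unneth gives $H^*(M^3_\bfa\times M^3_\bfb,\bbz)=\grL[\gra,\grb]$ with $\gra,\grb$ of degree $3$; thus the fiber has the integral cohomology ring of $S^3\times S^3$ and the nonzero rows of the Leray--Serre $E_2$-term sit only at $q=0,3,6$. Each factor of the bottom fibration has total space cohomologically $S^2$, which as in Proposition \ref{homnsphe} forces $d_4\gra=s_1^2$ and $d_4\grb=s_2^2$ (up to sign). By naturality of the spectral sequence under the map induced by $\psi$, the top fibration then satisfies
\[
d_4\gra=\psi^*(s_1^2)=l^2s^2,\qquad d_4\grb=\psi^*(s_2^2)=k^2s^2,
\]
and, by the Leibniz rule, $d_4(\gra\grb)=l^2s^2\grb-k^2s^2\gra=-s^2(k^2\gra-l^2\grb)$.

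The computation now runs on the coprimality $\gcd(k,l)=1$, hence $\gcd(k^2,l^2)=1$. In the $q=0$ row the two images $l^2s^{m+2}$ and $k^2s^{m+2}$ together generate all of $\bbz\langle s^{m+2}\rangle$, so only $1$ and $s$ survive, yielding $H^0$ and $H^2$. In the $q=3$ row the $d_4$-kernel at filtration $2m$ is the rank-one group generated by $k^2\gra s^{m}-l^2\grb s^{m}$; the differential descending from the $q=6$ row has image $-s^{m}(k^2\gra-l^2\grb)$ at filtration $2m$ whenever $m\geq 2$, so it cancels this kernel for every filtration $\geq 4$. Hence only the classes at filtrations $0$ and $2$ persist, namely $y=k^2\gra-l^2\grb$ in $H^3$ and $sy$ in $H^5$, while the $q=6$ row is annihilated. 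Thus $E_5=E_\infty$ and the groups agree with those of $S^2\times S^3$. For the ring structure I would argue by degree: writing $x=s$, one has $x^2\in H^4=0$ and $y^2\in H^6=0$, and multiplicativity of the spectral sequence identifies the product $xy$ with the surviving generator $sy$ of $H^5$; this pins down the ring as $H^*(S^2\times S^3,\bbz)$.

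The step I expect to be the main obstacle is the bookkeeping in the $q=3$ row: one must verify that the image of the single differential coming down from the $q=6$ row coincides \emph{exactly} with the $d_4$-kernel in the $q=3$ row at each filtration $\geq 4$, and it is precisely $\gcd(k^2,l^2)=1$ that makes both of these the same rank-one subgroup generated by $s^m(k^2\gra-l^2\grb)$. Confirming that no residual torsion or stray surviving class is left over — so that the outcome is the free cohomology ring of $S^2\times S^3$ and not some quotient — is the genuinely delicate point; the rest is a faithful transcription of the argument already carried out for $M^3_\bfa\star_{k,1}S^{2r+1}$.
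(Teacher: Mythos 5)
Your proposal is correct and takes essentially the same approach as the paper: the paper's own proof of Proposition \ref{cohomjoin2} simply says it is ``similar to the $r=1$ case of Proposition \ref{homnsphe}, so the details are left to the reader,'' and your argument is exactly that transcription, with the second-factor fibration $S^{3}\ra{1.3}\bbc\bbp^1\ra{1.3}\mathsf{B}S^1$ replaced by $M^3_\bfb\ra{1.3}\mathsf{B}B_\bfb\ra{1.3}\mathsf{B}S^1$, the differentials $d_4\gra=l^2s^2$, $d_4\grb=k^2s^2$, and the surviving primitive class $k^2\gra-l^2\grb$. The only cosmetic difference is that where the paper closes the $r=1$ case with an appeal to Poincar\'e duality, you instead verify the exact cancellation between the $q=3$ and $q=6$ rows and the ring structure directly, which is a harmless (indeed more explicit) substitute.
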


\begin{proof}
The proof is similar to the $r=1$ case of Proposition \ref{homnsphe}, so the details are left to the reader.
\end{proof}

\subsection{Distinguishing Contact Structures}\label{diffcon}
The crudest invariant of a contact structure is the first Chern class $c_1(\cald)$ of the contact bundle; nevertheless, it can distinguish countably many contact structures in many cases. A much more subtle contact invariant is the contact homology of Eliashberg, Giventhal, and Hofer \cite{ElGiHo00} employed for example in \cite{BoPa10}. However, in the present paper we cannot even pin down the diffeomorphism type, so we only make use of $c_1$ to distinguish contact structures.

Here we consider only a special case where given two base orbifolds $B_1$ and $B_2$, we assume that $p^*c_1^{orb}(B_i)=-I_i\gra_i$ where $I_i$ is the canonical index\footnote{Since we mainly deal with negative Sasakian structures, we use the canonical index instead of the Fano index used in \cite{BG00a}. Of course, the Fano index is just the negative of the canonical index.} of $B_i$ and $\gra_i\in H^2_{orb}(B_i,\bbz)$ is a generator. In this case we have 
\begin{equation}\label{2c1orb}
p^*c_1^{orb}(B_1\times B_2) =-I_1\gra_1-I_2\gra_2.
\end{equation}
Now consider the $S^1$ bundle determined by the K\"ahler class $k_1\gra_1+k_2\gra_2$ on $B_1\times B_2$. Let $\pi:M_1\star_{k_1,k_2}M_2\ra{1.6} B_1\times B_2$ denote the bundle projection. By the join construction we know that $\pi^*(k_1\gra_1+k_2\gra_2)=[d\eta]=0$. So there is a generator $\grg\in H^2(M_1\star_{k_1,k_2}M_2,\bbz)$ such that $\pi^*\gra_1=k_2\grg$ and $\pi^*\gra_2=-k_1\grg$. Now the first Chern class of the contact bundle is the pullback of the orbifold first Chern class on $B_1\times B_2$, that is, 
\begin{equation}\label{2c1orb2}
c_1(\cald)=\pi^*c_1^{orb}(B_1\times B_2)=-I_1\pi^*\gra_1-I_2\pi^*\gra_2=(I_2k_1-I_1k_2)\grg.
\end{equation}
The mod 2 reduction of $c_1(\cald)$ is a topological invariant, namely the second Stiefel-Whitney class $w_2(M_1\star_{k_1,k_2}M_2)\in H^2(M_1\star_{k_1,k_2}M_2,\bbz_2)$. This allows us to distinguish different manifolds with the same perfect fundamental group.

\section{Extremal Sasaki Metrics on Joins of Homology Spheres}

We shall always assume that $M^3_\bfa$ is an homology 3-sphere with constant $\Phi$ sectional curvature either $1$ or $-4$ and that it is not the standard sphere. So it is either the Poincar\'e sphere with constant sectional curvature $1$ or a negative homology sphere with constant $\Phi$ sectional curvature $-4$. 

\begin{theorem}\label{mainthm}
Let $N$ be a simply connected quasi-regular Sasakian manifold of dimension $2r+1$ which fibers in the orbifold sense over a projective algebraic orbifold $\calz$ with an orbifold K\"ahler metric $h$ of constant scalar curvature $S$. Let $\upsilon$ be the order of the Sasakian structure on $N$ and assume that $\gcd(la_0\cdots a_n,k\upsilon)=1$. Then the join $M^3_\bfa\star_{k,l}N$ is a smooth Sasakian manifold of dimension $2r+3$ with perfect fundamental group and the induced Sasakian structure has a ray of extremal CSC Sasaki metrics.
\end{theorem}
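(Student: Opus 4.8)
The plan is to reduce the existence of a CSC Sasaki metric on the join $M^3_\bfa\star_{k,l}N$ to a CSC condition on the transverse Kähler geometry, which splits as a product over the base orbifolds. Since the join is quasi-regular (by the smoothness condition $\gcd(la_0\cdots a_n,k\upsilon)=1$ it is a smooth manifold), its Reeb foliation has as leaf space a polarized projective algebraic orbifold, and by the discussion of extremal structures in Section 2 a Sasaki metric is CSC if and only if its transverse Kähler metric has constant scalar curvature. So it suffices to produce a constant-scalar-curvature Kähler metric on the quotient orbifold of $M^3_\bfa\star_{k,l}N$.

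The key structural input is the commutative diagram (\ref{comdia1}): the transverse Kähler structure of the join lives on $B_\bfa\times\calz$, where $B_\bfa=S^2/\pi_1^{orb}(B_\bfa)$ is the base orbifold of the Seifert fibration of $M^3_\bfa$ and $\calz$ is the Kähler base of $N$. By Belgun's Theorem \ref{Belthm} together with the orbifold Boothby-Wang correspondence discussed in Section \ref{orbKE}, $B_\bfa$ carries a Kähler-Einstein orbifold metric (coming from the constant $\Phi$-sectional curvature $\pm 1,-4$ structure on $M^3_\bfa$), and $\calz$ carries by hypothesis a CSC orbifold Kähler metric $h$ with constant scalar curvature $S$. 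First I would write the transverse Kähler metric of $\cals_{k,l}$ as a product-type metric $k_1$ times the pullback from $B_\bfa$ plus $k_2$ times the pullback from $\calz$ (suitably normalized as in the join construction $\gro_{k_1,k_2}=k_1\gro_1+k_2\gro_2$), and then compute its scalar curvature.

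The main computational step is the observation that the scalar curvature of a Riemannian (here Kähler) product is simply the sum of the scalar curvatures of the factors. Since $B_\bfa$ is a real surface its Kähler-Einstein metric automatically has constant scalar curvature, and $h$ on $\calz$ has constant scalar curvature $S$ by hypothesis; hence the product transverse metric on $B_\bfa\times\calz$ has constant scalar curvature. Translating back through $s_g=s_T-2n$ (Equation (\ref{cscsas})), the corresponding Sasaki metric on the total space has constant scalar curvature, hence is extremal. The transverse homothety (\ref{transhomot}) then produces the claimed ray: rescaling by $a\in\bbr^+$ preserves the CSC property, so the entire ray $\cals_a$ consists of CSC Sasaki metrics. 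The perfect fundamental group is exactly the content of Theorem \ref{1conperfect}, which applies verbatim under the same smoothness hypothesis.

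The subtlety I expect to be the main obstacle is the \emph{matching of scales} in the product metric. The join construction does not give an arbitrary product polarization $k_1\gro_1+k_2\gro_2$; the relative sizes $k_1,k_2$ are pinned down by the integers $k,l$ and by the orbifold Euler numbers of the two Seifert/orbifold fibrations, so one must check that the specific transverse Kähler class arising on $B_\bfa\times\calz$ is genuinely of product type with each factor carrying a CSC representative, rather than merely admitting some CSC metric in a different class. Because the $B_\bfa$ factor is a curve, any Kähler metric on it is CSC, so the relative scaling is harmless on that factor; the real content is that the $\calz$-factor metric $h$ is taken in the correct polarization, which is guaranteed by the hypothesis that $N$ fibers over $\calz$ with the given $h$. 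Once this class-matching is verified, the scalar-curvature-of-a-product computation finishes the argument, and the orbifold-versus-manifold bookkeeping is already handled by the smoothness condition and Theorem \ref{1conperfect}.
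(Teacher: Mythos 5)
Your proposal is correct and takes essentially the same route as the paper, whose entire proof is the one-line remark that the result ``easily follows from the join construction together with Theorem \ref{1conperfect}''; you have simply made explicit what that one line compresses: the transverse K\"ahler geometry of the join is the product $B_\bfa\times\calz$ polarized by $k\gro_1+l\gro_2$, the K\"ahler--Einstein orbifold metric on $B_\bfa$ (from Belgun's theorem and orbifold Boothby--Wang) times the CSC metric $h$ gives a product CSC orbifold metric, this lifts to a CSC Sasaki metric via $s_g=s_T-2n$, and the transverse homothety (\ref{transhomot}) yields the ray. The only blemish is your side claim that \emph{any} K\"ahler metric on a curve is CSC, which is false as literally stated, but it is harmless here since all the scale-matching requires is that constant rescalings of CSC metrics (on either factor) remain CSC.
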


\begin{proof}
The proof easily follows from the join construction together with Theorem \ref{1conperfect}.
\end{proof}

When $N$ has a Sasakian structure whose automorphism group $\gA\gu\gt(\cals)$ has dimension greater than 1, one can deform in the Sasaki cone to obtain new extremal Sasakian structures. Indeed, the Openness Theorem of \cite{BGS06} guarentees the existence of an open set of such extremal Sasaki metrics.

\subsection{Extremal Sasakian metrics on $M^3_\bfa\star_{k,l}S^{2r+1}$}
We can easily obtain constant scalar curvature Sasakian metrics on manifolds $M^3_\bfa\star_{k,l}S^{2r+1}$ from the lift of the product K\"ahler orbifold metric on $B(\bfa)\times \bbc\bbp^r$. Let us describe the orbifold structure on $B(\bfa)$. As an algebraic variety it is $\bbc\bbp^1$ with $n+1$ distinct marked points. Thus,  as an algebraic variety the product $B(\bfa)\times \bbc\bbp^r$ is $\bbc\bbp^1\times \bbc\bbp^r$ with a non-trivial orbifold structure on the first factor described by branch divisors $\grD=\sum_i\grD_i$. On the first factor we have an orbifold K\"ahler-Einstein metric with scalar curvature $-2$, and on the second factor the standard Fubini-Study metric with constant scalar curvature $4r(r+1)$.  Since there are an infinite number of integers $l,k,a_0,\ldots,a_n$ that satisfy $\gcd(la_0\cdots a_n,k)=1$, Theorems \ref{1conperfect} and \ref{mainthm}, and the results of Section \ref{diffcon} imply 

\begin{theorem}\label{homs2s3}
Let $M^3_\bfa$ be a negative homology sphere and assume that $\gcd(la_0\cdots a_n,k)=1$. Then there is a countably infinite number of $(2r+3)$-dimensional contact manifolds $M^3_\bfa\star_{k,l}S^{2r+1}$ with a perfect fundamental group that admit a ray of extremal CSC Sasaki metrics.  
\end{theorem}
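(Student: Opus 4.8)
The plan is to specialize Theorem \ref{mainthm} to $N=S^{2r+1}$ and then to generate the required infinitude by letting the data $(\bfa,k,l)$ range over the admissible set, separating the resulting spaces with the invariants of Theorem \ref{1conperfect} and Section \ref{diffcon}.

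First I would take $N=S^{2r+1}$ with its standard regular Sasakian structure, which fibers over $\calz=\bbc\bbp^r$ carrying the Fubini-Study metric of constant scalar curvature $4r(r+1)$; its order is $\upsilon=1$, so the smoothness hypothesis $\gcd(la_0\cdots a_n,k\upsilon)=1$ of Theorem \ref{mainthm} collapses to the assumed $\gcd(la_0\cdots a_n,k)=1$. Since $M^3_\bfa$ is a \emph{negative} homology sphere it is not the Poincar\'e sphere, so by Section \ref{orbKE} the base orbifold $B_\bfa$ carries an orbifold K\"ahler-Einstein metric of constant scalar curvature $-2$. The transverse K\"ahler structure of the join is the product orbifold $B_\bfa\times\bbc\bbp^r$ with the product of these two constant-scalar-curvature metrics, which is again CSC because the scalar curvature of a product K\"ahler metric is the (constant) sum of the factor scalar curvatures. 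Hence Theorem \ref{mainthm} applies for every admissible triple and yields a smooth $(2r+3)$-dimensional Sasakian manifold $M^3_\bfa\star_{k,l}S^{2r+1}$ with perfect fundamental group, carrying a ray of extremal CSC Sasaki metrics obtained through the transverse homothety (\ref{transhomot}).

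The step I expect to be the main obstacle is producing a \emph{countable infinity} of genuinely distinct such contact manifolds, because by Proposition \ref{homnsphe} they all share the rational cohomology ring of $S^2\times S^{2r+1}$ and so cannot be separated by cohomology alone. I would resolve this in two complementary ways. For a clean infinite subfamily I fix $(k,l)=(1,1)$, which is admissible for every $\bfa$, and let $\bfa$ range over infinitely many pairwise coprime tuples of negative type, such as $(2,3,p)$ with $p\ge 7$ prime; Theorem \ref{1conperfect} then identifies the fundamental group of the join with $\pi_1^{orb}(B_\bfa)\cong\pi_1(M^3_\bfa)/\bbz$, the cocompact Fuchsian group of the hyperbolic orbifold $S^2(a_0,\dots,a_n)$. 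Since such groups are determined up to isomorphism by their signature (genus zero with cone orders $a_0,\dots,a_n$), distinct $\bfa$ give non-isomorphic fundamental groups and hence non-homeomorphic joins, all with the integral cohomology ring of $S^2\times S^{2r+1}$. To separate members of fixed topology I would then invoke Section \ref{diffcon}: with $B_1=B_\bfa$ of canonical index $I_\bfa=|\bfd|-|\bfw|$ and $B_2=\bbc\bbp^r$ of canonical index $-(r+1)$, formula (\ref{2c1orb2}) gives the contact invariant $c_1(\cald)=-\bigl((r+1)k+I_\bfa l\bigr)\grg$ on a free generator $\grg$ of $H^2(M^3_\bfa\star_{k,l}S^{2r+1},\bbz)\cong\bbz$ (Lemma \ref{Hjoin}), whose mod $2$ reduction realizes $w_2$ and distinguishes inequivalent contact structures as $(k,l)$ vary. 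Combining the group-theoretic and Chern-class devices yields the asserted countably infinite family.
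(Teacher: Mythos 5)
Your proposal follows the paper's own argument essentially verbatim: specialize Theorem \ref{mainthm} to $N=S^{2r+1}$ (order $\upsilon=1$) fibering over $\bbc\bbp^r$ with the Fubini--Study metric, take the product with the negative K\"ahler--Einstein orbifold metric of scalar curvature $-2$ on $B_\bfa$ to get a CSC transverse K\"ahler metric, and then invoke Theorems \ref{1conperfect} and \ref{homnsphe} together with Section \ref{diffcon} to obtain countably many distinct contact manifolds. If anything, your write-up is more explicit than the paper's one-line justification of the infinitude, and your computation $c_1(\cald)=-\bigl((r+1)k+I_\bfa l\bigr)\grg$ agrees with Equation (\ref{2c1orb2}).
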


Then Proposition \ref{homnsphe} and Theorem \ref{homs2s3} immediately give Theorem \ref{thmA} of the introduction.

\subsection{Extremal Sasakian metrics on $M^3_\bfa\star_{k,l}M^3_\bfb$}

Notice that since both homology 3-spheres have constant $\Phi$-sectional curvature, the bases have constant holomorphic sectional curve; hence, they both have CSC K\"ahler orbifold metrics. So if $\gcd(la_0\cdots a_n, kb_0\cdots b_n)=1$, the 5-manifold $M^3_\bfa\star_{k,l}M^3_\bfb$ has a CSC Sasaki metric. This includes the case when one of the homology 3-spheres is  the Poincar\'e sphere.

When both homology 3-spheres are negative, we obtain Sasaki-$\eta$-Einstein metrics in certain cases. In order to get such metrics we need to select the $S^1$ orbibundle over $B_\bfa\times B_\bfb$ whose Euler class is proportional to the orbifold first Chern class $c_1^{orb}(B_\bfa\times B_\bfb)$ with negative proportionality constant.
From Equation (\ref{c1orb}) we have with obvious notation
\begin{equation}\label{c1orb2}
c_1^{orb}(B_\bfa\times B_\bfb) =\frac{|\bfw_\bfa|-|\bfd_\bfa|}{d_\bfa}\gra + \frac{|\bfw_\bfb|-|\bfd_\bfb|}{d_\bfb}\grb.
\end{equation}
So by Equation (\ref{2c1orb2}) to obtain a Sasaki-$\eta$-Einstein metric on $M^3_\bfa\star_{k,l}M^3_\bfb$ we need 
$c_1(\cald)=\pi^*c_1^{orb}(B_\bfa\times B_\bfb)=0$, and we do this by choosing
$k=|\bfd_\bfa|-|\bfw_\bfa|=I_\bfa$ and $l=|\bfd_\bfb|-|\bfw_\bfb|=I_\bfb$ as long as they are relatively prime. As in \cite{BG00a} to handle the case when they are not relatively prime, we define the {\it relative indices} by 
$$\cali_\bfa=\frac{I_\bfa}{\gcd(I_\bfa,I_\bfb)}, \qquad \cali_\bfb=\frac{I_\bfb}{\gcd(I_\bfa,I_\bfb)}.$$
Then $\gcd(\cali_\bfa,\cali_\bfb)=1$. So generally we obtain a Sasaki-$\eta$-Einstein metric on $M^3_\bfa\star_{k,l}M^3_\bfb$ by choosing $k=\cali_\bfa$ and $l=\cali_\bfb$. We also make note of the easily shown fact that $\gcd(d_\bfa,I_\bfa)=1$ for all $\bfa$ with the $a_i$ pairwise relatively prime. This guarentees that as long as $\gcd(d_\bfa,d_\bfb)=1$, there is a Sasaki-$\eta$-Einstein metric on $M^3_\bfa\star_{\cali_\bfa,\cali_\bfb}M^3_\bfb$. Summarizing we have

\begin{theorem}\label{MaMb}
Let $M^3_\bfa$ and $M^3_\bfb$ be negative homology 3-spheres with canonical indices $I_\bfa$ and $I_\bfb$, respectively. Then 
\begin{enumerate}
\item If $\gcd(ld_\bfa,kd_\bfb)=1$ the 5-dimensional contact manifold $(M^3_\bfa\star_{k,l}M^3_\bfb,\cald)$ with $c_1(\cald)=(kI_\bfb-lI_\bfa)\grg$ admits a CSC Sasaki metric.
\item If $\gcd(d_\bfa,d_\bfb)=1$, the 5-dimensional contact manifold $(M^3_\bfa\star_{\cali_\bfa,\cali_\bfb}M^3_\bfb,\cald)$ with $c_1(\cald)=0$ admits a ray of negative Sasaki-$\eta$-Einstein metrics; hence, it also admits Lorentzian Sasaki-Einstein metrics.
\end{enumerate}
\end{theorem}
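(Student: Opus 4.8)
The plan is to reduce both parts to the transverse K\"ahler geometry of the product base $B_\bfa\times B_\bfb$, using the principle recalled above that a Sasaki metric is CSC (resp.\ $\eta$-Einstein) precisely when its transverse K\"ahler metric is CSC (resp.\ K\"ahler--Einstein). For part (1), I would first observe that the hypothesis $\gcd(ld_\bfa,kd_\bfb)=1$ is exactly the smoothness condition $\gcd(\upsilon_1k_2,\upsilon_2k_1)=1$ of the join construction, with $\upsilon_1=d_\bfa$, $\upsilon_2=d_\bfb$, $k_1=k$, $k_2=l$, so $M^3_\bfa\star_{k,l}M^3_\bfb$ is a smooth Sasakian $5$-manifold, with perfect fundamental group by the exact sequence (\ref{longexact2}). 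Since both homology spheres are negative, Belgun's Theorem \ref{Belthm} together with Section \ref{orbKE} equips each base with a negative K\"ahler--Einstein---in particular CSC---orbifold metric whose K\"ahler class is a positive multiple of the generator $\gra_\bfa$ (resp.\ $\grb_\bfb$) of $H^2_{orb}(B_\bfa,\bbz)$ (resp.\ $H^2_{orb}(B_\bfb,\bbz)$), normalised as in Section \ref{diffcon} so that $p^*c_1^{orb}(B_\bfa)=-I_\bfa\gra_\bfa$. Rescaling each factor to realise the classes $k\gra_\bfa$ and $l\grb_\bfb$ preserves CSC, and the Riemannian product of two CSC K\"ahler metrics is again CSC because scalar curvatures add; this product lifts to a CSC Sasaki metric on the join, a ray is generated by the transverse homothety (\ref{transhomot}), and $c_1(\cald)=(kI_\bfb-lI_\bfa)\grg$ is read off from (\ref{2c1orb2}).

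For part (2) the choice $k=\cali_\bfa$, $l=\cali_\bfb$ is dictated by requiring the orbibundle Euler class $k\gra_\bfa+l\grb_\bfb$ to be a negative multiple of $c_1^{orb}(B_\bfa\times B_\bfb)=-I_\bfa\gra_\bfa-I_\bfb\grb_\bfb$, which forces $k:l=I_\bfa:I_\bfb$ and hence, in primitive form, $k=\cali_\bfa$, $l=\cali_\bfb$; with this choice (\ref{2c1orb2}) gives $c_1(\cald)=(\cali_\bfa I_\bfb-\cali_\bfb I_\bfa)\grg=0$. Next I would verify smoothness, i.e.\ $\gcd(\cali_\bfb d_\bfa,\cali_\bfa d_\bfb)=1$: starting from the easily checked fact $\gcd(I_\bfa,d_\bfa)=\gcd(I_\bfb,d_\bfb)=1$ (reduce $I_\bfa$ modulo each $a_i$ to get $-w_i$, which is coprime to $a_i$), and combining it with $\gcd(\cali_\bfa,\cali_\bfb)=1$ and the standing hypothesis $\gcd(d_\bfa,d_\bfb)=1$, a short prime-by-prime argument rules out any common factor.

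The crux, and the step I expect to be the genuine obstacle, is producing a negative K\"ahler--Einstein metric on $B_\bfa\times B_\bfb$ in the prescribed class, since a Riemannian product of Einstein metrics is Einstein only when the two Einstein constants coincide---so naively taking a K\"ahler--Einstein metric on each factor fails unless the constants are matched. The point is that scaling the metric on $B_\bfa$ into the class $\cali_\bfa\gra_\bfa$ pins its Einstein constant to the ratio of the first Chern class $-I_\bfa\gra_\bfa$ to the K\"ahler class $\cali_\bfa\gra_\bfa$, namely $-I_\bfa/\cali_\bfa=-\gcd(I_\bfa,I_\bfb)$ up to the universal normalisation, while the identical computation on $B_\bfb$ yields $-I_\bfb/\cali_\bfb=-\gcd(I_\bfa,I_\bfb)$, the same value. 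Thus the very proportionality $k:l=I_\bfa:I_\bfb$ that annihilates $c_1(\cald)$ is exactly what equalises the two transverse Einstein constants, so the negative product K\"ahler--Einstein metric exists.

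Finally, this transverse negative K\"ahler--Einstein metric lifts to a negative Sasaki-$\eta$-Einstein metric on $M^3_\bfa\star_{\cali_\bfa,\cali_\bfb}M^3_\bfb$; rescaling the transverse metric, equivalently applying the transverse homothety (\ref{transhomot}), keeps it K\"ahler--Einstein and hence produces a whole ray of negative $\eta$-Einstein structures. The Lorentzian Sasaki-Einstein assertion then follows from the observation recalled above that, in Lorentzian signature, a transverse homothety converts a negative Sasaki-$\eta$-Einstein metric into a Lorentzian Sasaki-Einstein one.
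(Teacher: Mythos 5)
Your proposal is correct and takes essentially the same route as the paper: both parts are reduced to the transverse K\"ahler geometry of $B_\bfa\times B_\bfb$, part (1) by lifting the product of the two CSC (K\"ahler--Einstein) orbifold metrics and part (2) by choosing $(k,l)=(\cali_\bfa,\cali_\bfb)$ so that the orbibundle Euler class is negatively proportional to $c_1^{orb}(B_\bfa\times B_\bfb)$, forcing $c_1(\cald)=0$. The only difference is expository: you spell out two points the paper leaves implicit, namely the prime-by-prime check of the smoothness condition $\gcd(\cali_\bfb d_\bfa,\cali_\bfa d_\bfb)=1$ (via $\gcd(I_\bfa,d_\bfa)=1$) and the fact that the proportionality $k:l=I_\bfa:I_\bfb$ is exactly what equalises the two Einstein constants---in the paper this is hidden in the normalisation of Section \ref{orbKE}, where both factors carry the scalar-curvature $-2$ K\"ahler--Einstein metric, so their product is automatically K\"ahler--Einstein.
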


Since for fixed $\bfa$ and $\bfb$ there are a countably infinite number of relatively prime pairs $(k,l)$ that satisfy $\gcd(ld_\bfa,kd_\bfb)=1$, there are a countably infinite number of such contact 5-manifolds.

\subsection{More Sasaki-$\eta$-Einstein Manifolds}

It is now straightforward to construct higher dimensional examples of $\eta$-Einstein and Lorentzian Sasaki-Einstein metrics. Since as mentioned above $\gcd(d_\bfa,I_a)=1=\gcd(\cali_\bfa,\cali)$, we have

\begin{theorem}\label{negetaEin}
Let $M^3_\bfa$ be a negative homology 3-sphere and let $N$ be a simply connected negative Sasakian manifold with canonical index $I$ and order $\upsilon$. Let $\cali_\bfa,\cali$ denote the relative canonical indices. Then if $\gcd(d_\bfa \cali,\upsilon)=1$ the manifold $M^3_\bfa \star_{\cali_\bfa,\cali} N$ has perfect fundamental group and admits a ray of negative Sasaki-$\eta$-Einstein metrics, and hence, a Lorentzian Sasaki-Einstein metric.
\end{theorem}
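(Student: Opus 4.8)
The plan is to assemble this result by combining the general $\eta$-Einstein join machinery already developed in Theorem \ref{MaMb}(2) for the case of two homology 3-spheres with the more general topological input of Theorem \ref{1conperfect}. The key observation is that the construction producing a Sasaki-$\eta$-Einstein metric on a join does not actually require the second factor to be a homology 3-sphere; it requires only that both factors be negative Sasakian manifolds so that their base orbifolds carry negative K\"ahler-Einstein metrics, and that the $S^1$-orbibundle over the product base be chosen with Euler class proportional to the orbifold first Chern class with the correct sign. Thus the essential mechanism is to replace $M^3_\bfb$ by the general negative Sasakian manifold $N$ throughout the argument leading to Theorem \ref{MaMb}(2), keeping $M^3_\bfa$ as the homology 3-sphere factor that contributes the perfect fundamental group.

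First I would record the topological conclusion: applying Theorem \ref{1conperfect} with the smoothness hypothesis $\gcd(\cali_\bfa d_\bfa, \cali\,\upsilon)=1$ (which plays the role of the condition $\gcd(la_0\cdots a_n,k\upsilon)=1$ there, with $k=\cali_\bfa$ and $l=\cali$) guarantees that $M^3_\bfa\star_{\cali_\bfa,\cali}N$ is a smooth $(2r+3)$-dimensional Sasakian manifold with perfect fundamental group. Here I must check that the stated gcd condition $\gcd(d_\bfa\cali,\upsilon)=1$ does in fact suffice for smoothness; one uses the arithmetic fact recorded in the text, namely $\gcd(d_\bfa,I_\bfa)=1$ (hence $\gcd(d_\bfa,\cali_\bfa)=1$) together with $\gcd(\cali_\bfa,\cali)=1$, to reduce the full smoothness gcd to the hypothesis given.

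Next I would address the geometry. Since $N$ is negative Sasakian, its base orbifold $\calz$ admits a negative K\"ahler-Einstein metric, and likewise the base $B_\bfa$ of the negative homology 3-sphere carries a negative K\"ahler-Einstein orbifold metric by Belgun's theorem (Theorem \ref{Belthm}) via constant $\Phi$-sectional curvature $-4$. To obtain an $\eta$-Einstein metric on the join I choose the orbibundle whose Euler class is proportional to $c_1^{orb}(B_\bfa\times\calz)$ with negative constant, which by the computation \eqref{2c1orb2} amounts to taking the join parameters equal to the relative canonical indices $k=\cali_\bfa$, $l=\cali$, forcing $c_1(\cald)=0$. Then the lifted product K\"ahler-Einstein metric on $B_\bfa\times\calz$ yields a transverse K\"ahler-Einstein metric, which by the discussion following \eqref{etaE} is exactly a negative Sasaki-$\eta$-Einstein structure; rescaling via the transverse homothety \eqref{transhomot} produces the full ray, and passing to Lorentzian signature gives the Lorentzian Sasaki-Einstein metric as explained at the end of Section 2.

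The main obstacle I anticipate is purely arithmetic: verifying that the single relatively-prime hypothesis $\gcd(d_\bfa\cali,\upsilon)=1$ genuinely delivers the smoothness condition of Theorem \ref{1conperfect} with $(k,l)=(\cali_\bfa,\cali)$, rather than requiring the larger gcd involving $\cali_\bfa$ as well. This is resolved by the relations $\gcd(d_\bfa,\cali_\bfa)=1$ and $\gcd(\cali_\bfa,\cali)=1$ asserted in the text, but it must be spelled out that the product $\cali_\bfa d_\bfa$ shares no common factor with $\cali\,\upsilon$ beyond what $\gcd(d_\bfa\cali,\upsilon)=1$ already guarantees, together with the fact that $\cali_\bfa$ and $\cali$ divide $I_\bfa$ and $I$ which are themselves coprime to the relevant orders. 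Everything else follows formally from the already-proved theorems, so the geometric content is essentially inherited and only the bookkeeping of indices requires care.
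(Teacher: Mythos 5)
Your proposal is correct and takes essentially the same approach as the paper, which presents this theorem as an immediate consequence of the join machinery of Theorems \ref{1conperfect} and \ref{MaMb}(2) together with precisely the facts $\gcd(d_\bfa,I_\bfa)=1$ and $\gcd(\cali_\bfa,\cali)=1$ that you invoke. Note one transcription slip: with $k=\cali_\bfa$ and $l=\cali$ the smoothness condition of Theorem \ref{1conperfect} reads $\gcd(\cali\, d_\bfa,\cali_\bfa\upsilon)=1$, not $\gcd(\cali_\bfa d_\bfa,\cali\,\upsilon)=1$ as you wrote; the facts you cite verify exactly the former (the latter would require $\gcd(\cali_\bfa,\upsilon)=1$ and $\gcd(d_\bfa,\cali)=1$, which the hypothesis does not supply), so your bookkeeping goes through once the indices are put in the correct slots.
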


\begin{proof}[Proof of Theorem \ref{sasetaein}] We need to find simply connected negative eta-Einstein 5-manifolds with arbitrary second Betti number $b_2$ that satisfy the hypothesis of Theorem \ref{negetaEin}. Then by Lemma \ref{Hjoin} we will have arbitrary $b_2(M^3_\bfa \star_{\cali_\bfa,\cali} N)=b_2(N)+1\geq 1$. In order to find such $N$ we make use of the work of Gomez \cite{Gom11} on negative $\eta$-Einstein structures on arbitrary connected sums of $S^2\times S^3$. To exhaust all connected sums $l(S^2\times S^3)$ we divide the analysis into seven cases, two infinite series and five sporatic cases. In each case we need only to check that $\gcd(I,\upsilon)=1$. Then we can always find $\bfa$ such $\gcd(d_\bfa,\upsilon)=1$.  The first series is given by the link of the hypersurface $z_0^4+z_1^{8k+2} +z_2^{4k+1}z_3+z_3^{2k+1}z_2=0.$ Here $k\geq 1$ and the second Betti number $b_2$ is $l=2k+1$. One checks that this has $\upsilon=2(4k+1)$ and $I=16k(4k+1)-(4k+3)^2$. It follows that $\gcd(I,\upsilon)=1$ in this case. The second series applies to $b_2=k-1$ with $k\geq 9$ and odd. Here the hypersurface is $z_0^4+z_1^2+z_2^k+z_3^k=0$ which has $I=k-8$ and $\upsilon =2k$. One easily sees that $\gcd(I,\upsilon)=1$ in this case. The remaining five cases are $l=0,1,2,4,6$. The last two are represented in \cite{Gom11}; however, their weights and indices are quite large, but more importantly, the $l=4$ case has $\gcd(I,\upsilon)=11$, so the join will not be smooth. The case $l=6$ in \cite{Gom11} does satisfy this condition, so it can be used. Nevertheless, in the table below we give simpler polynomials to cover these five sporatic cases. We used Orlik's formula (see Corollary 9.3.13 in \cite{BG05}) with a Maple program to compute the second Betti number $b_2$. Note that they are not necessarily connected sums of $S^2\times S^3$, as there may be torsion in $H_2(N,\bbz)$. 

\bigskip
\centerline{Sporatic Cases with $I=1$}
\begin{center}
\begin{tabular}{| l || l | l |}
\hline
$b_2$  & $~~~~~\bfw$ & ~~~Polynomial \\ \hline 
0 & (5,6,6,8) & $z_0^8+z_1^4+z_2^4+z_3^3$ \\ \hline
1 & (2,4,6,11) & $z_0^{12}+z_1^6+z_2^4+z_3^2z_0$ \\ \hline
2 & (6,7,28,42) & $z_0^{14}+z_1^{12}+z_2^3+z_3^2$ \\ \hline
4 & (4,5,20,30) & $z_0^{15}+z_1^{12}+z_2^3+z_3^2$ \\ \hline
6 & (3,4,12,16) & $z_0^{12}+z_1^9+z_2^3+z_3^2z_1$ \\ \hline

\end{tabular}
\end{center}
\medskip
\end{proof}

\subsection{Sasaki-Einstein Metrics on Manifolds with Perfect Fundamental Group}

Since the only positive homology 3-sphere with perfect fundamental group is the Poincar\'e sphere $S^3/\bbi^*$, only joins with $S^3/\bbi^*$ can give Sasaki-Einstein metrics. Furthermore, since from Belgun's Theorem \ref{Belthm} the positive case corresponds to the bi-invariant constant sectional curvature $1$ metric, these can all be obtained as quotients by $\bbi^*$ of the join of the standard $S^3$ with any simply connected Sasaki-Einstein manifold. So we can consider all the examples in \cite{BG00a} which involve a join of $S^3$ with a simply connected Sasaki-Einstein manifold. We only need to choose $k,l$ to obtain a monotone $S^1$ orbibundle, that is, its cohomology class is primitive and proportional to the first Chern class of the orbifold anti-canonical bundle $K^{-1}_{orb}(\calz)$ with $\calz=\calz_1\times \calz_2$. We also need to scale the orbifold K\"ahler metric $h$ on $\calz$ so that its scalar curvature is $4(n_1+n_2)(n_1+n_2+1)$ where the orbifold $\calz_i$ has complex dimension $n_i$. 

The Poincar\'e sphere can be represented by the link of the polynomial $z_0^5+z_1^3+z_2^2$ with weight vector $\bfw=(6,10,15)$ and degree $d=30$. It has Fano index $I_F=|\bfw|-d=31-30=1$. So using Theorems \ref{1conperfect} and \ref{homnsphe} we obtain Theorem \ref{sethm} of the Introduction.

\def\cprime{$'$} \def\cprime{$'$} \def\cprime{$'$} \def\cprime{$'$}
  \def\cprime{$'$} \def\cprime{$'$} \def\cprime{$'$} \def\cprime{$'$}
  \def\cdprime{$''$} \def\cprime{$'$} \def\cprime{$'$} \def\cprime{$'$}
  \def\cprime{$'$}
\providecommand{\bysame}{\leavevmode\hbox to3em{\hrulefill}\thinspace}
\providecommand{\MR}{\relax\ifhmode\unskip\space\fi MR }
\providecommand{\MRhref}[2]{%
  \href{http://www.ams.org/mathscinet-getitem?mr=#1}{#2}
}
\providecommand{\href}[2]{#2}

\end{document}